\newtheorem*{mtheorem}{Main Theorem}
\newtheorem*{theorem}{Theorem}
\newtheorem*{lemma}{Lemma}
\newtheorem*{prop}{Proposition}
\newtheorem*{claim}{Claim}
\theoremstyle{definition}
\newtheorem*{definition}{Definition}
\newtheorem*{rmk}{Remark}
\newcommand{\ang}{\measuredangle}
\newcommand{\FF}{\mathcal{F}}
\newcommand{\bdy}{\partial}
\newcommand{\no}{\noindent}
\newcommand{\wtilde}{\widetilde}
\begin{document}
\title{Point Leaf Maximal Singular Riemannian Foliations in Positive Curvature}

\author{Adam Moreno}
\address{Department of Mathematics\\
University of Notre Dame\\
      Notre Dame, IN 46556\\USA
      }
\email{amoreno3@nd.edu}

\maketitle

\begin{abstract}
We generalize the notion of fixed point homogeneous isometric group actions to the context of singular Riemannian foliations. We find that in some cases, positively curved manifolds admitting these so-called \textit{point leaf maximal} SRF's are diffeo/homeomorphic to compact rank one symmetric spaces. In all cases, manifolds admitting such foliations are cohomology CROSSes or finite quotients of them. Among non-simply connected manifolds, we find examples of such foliations which are non-homogeneous. 
\end{abstract}

\section{Introduction}

\indent The link between symmetry and positive curvature has been studied heavily since the Grove Symmetry Program began in 1991. The hope is to find, among other things, either a topological obstruction to positive curvature, or new examples of manifolds which admit positive curvature metrics. The basic start-up kit is this: (1) Pick a notion of ``high symmetry,'' then (2) say as much as you can topologically about positively curved manifolds with the chosen type of symmetry. Perhaps the most appealing feature is that the user is free to choose the notion of symmetry (e.g. low cohomogeneity, high isometry rank, etc.) and "classification" then means the best you can do (e.g. homeo/diffeo-morphism, tangential homotopy equivalence, etc.). This systematic approach has yielded several results, for which the reader in encouraged to see \cite{grove2008developments} and \cite{wilking2006positively}.

One way to measure symmetry is by the \textit{cohomogeneity} of an action, or the dimension of the resulting orbit space. At one extreme are the cohomogeneity 0 manifolds - connected manifolds on which a group $G$ acts transitively (i.e. homogeneous manifolds). Positively curved such manifolds were classified long before the symmetry program through the combined efforts of Aloff, Berard-Bergary, Berger, and Wallach in the 70's (see \cite{wilking2015revisiting}). It seems natural enough to then ask, ``how large can the orbits of an isometric action be in the presence of singular orbits, and what does this tell us about our manifold?'' For example, when the singularities are fixed points and the remaining orbits are as large as possible, Grove and Searle \cite{Searlediff} found that the manifold is diffeomorphic to a CROSS - a compact rank one symmetric space (equivariance with a linear action was proved later in \cite{galaz2014note}). Such isometric actions are called \textit{fixed point homogeneous}.

The analogue to transitive group actions in the context of singular Riemannian foliations reveals an interesting difference between the two settings. Evidently, having a transitive group action (one with a single orbit) provides enough information to achieve a classification in positive curvature. However, \textit{any} manifold can be endowed with a single leaf foliation, so such foliations (when not induced by a group action) are trivial and provide no topological information. Nonetheless, one can still ask a similar question to that above. Namely, ``how large can the leaves of a singular Riemannian foliation be in the presence of point leaves, and what does this say about the manifold?'' We refer to such foliations as \textit{point leaf maximal}. We will prove the following,

\begin{mtheorem}
Let $M$ be a closed, connected, positively curved manifold admitting a point leaf maximal singular Riemannian foliation. If $M$ is simply connected, then $M$ is homeomorphic to a sphere or has the cohomology ring of a CROSS. If $M$ is not simply connected, then $M$ is either homeomorphic to a spherical space form or is a $\mathbb{Z}_2$-quotient of an odd dimensional cohomology complex projective space. 
\end{mtheorem}

In the non-simply connected case, we find examples not previously covered by the analogous results for isometric group actions. That is, we find examples of point leaf maximal SRFs that are not induced by fixed point homogeneous isometric group actions.

The layout of this paper is as follows: Section (\ref{Pre}) contains the basic definitions and tools used from the theory of singular Riemannian foliations (SRFs) and Alexandrov geometry that are used throughout. In section (\ref{PLM}), we define \textit{point leaf maximal} singular Riemannian foliations. Their structure is described in (\ref{Str}) and we discuss two submanifolds which play an important role in (\ref{sbmfd}). The classification is divided into cases depending on which of these submanifolds occur, which we study in (\ref{SC}) and (\ref{NSC}). Collecting the results, we obtain the main theorem.

I would like to thank my advisor Karsten Grove for his unending patience and invaluable insights. I would also like to thank Marco Radeschi for sharing his expertise on singular Riemannian foliations and Stephan Stolz for his help on all things topological.

\section{Prelimaries} \label{Pre}
\subsection{Singular Riemannian Foliations} \label{SRFs}
Here we provide a rapid rundown of the material used throughout. Everything in this section can be found with proofs in \cite{molino} (see chapter 6) and also in the unpublished lecture notes of Marco Radeschi \cite{srf}. See either source for a thorough treatment of the topics, as some more involved definitions have been omitted here for brevity.\\

A \textit{singular Riemannian foliation} is a partition $\FF=\{L_p\}_{p\in M}$ of a Riemannian manifold $(M,g)$ into smooth, connected, injectively immersed submanifolds (called \textit{leaves}) with two properties: (1) $\FF$ is a \textit{transnormal system} - i.e. any geodesic that emanates perpendicular to a leaf is perpendicular to all leaves it intersects and (2) $\FF$ is a \textit{singular foliation} - i.e. there exists a collection of vector fields on $M$ that span the tangent space to the leaves at all points. We refer to the triple $(M,g,\FF)$ as a singular Riemannian foliation (SRF) and may simply write $(M,\FF)$ when the metric on $M$ is understood. For our purposes, we will only consider singular Riemannian foliations (SRFs) whose leaves are closed.

Singular Riemannian foliations generalize orbit decompositions of a manifold by isometric group actions (which are a special case, called \textit{homogeneous SRFs}). As in the group action case, it is important to study the local, infinitesimal behavior about the leaves. For this, fix $p\in M$ and let $P$ be a connected open subset of $L_p$ containing $p$, sometimes called a \textit{plaque}. Next, let $\epsilon>0$ be sufficiently small so that $\exp:\nu^\epsilon L_p\to M$ is a diffeomorphism onto its image (here, $\nu^\epsilon L_p$ denotes the normal space to $L_p$ at $p$). A \textit{distinguished tubular neighborhood} is an open set $O\subset M$ of the form $O=O_{\epsilon}(P):=\text{exp}(\nu^{\epsilon}P)$. There is a partition $\FF_O$ of $O$ by the connected components of the intersections $L'\cap O$, for $L'\in \FF$. We use $\FF_O$ to form a singular Riemannian foliation on $T_pM$ with respect to the Euclidean metric.

Let $U\subset T_pP$ be a coordinate chart, and $(\nu_p^{\epsilon} P, \FF_p)$ be the foliation defined by pulling back the leaves of $(O,\FF_O)$ via $\exp:\nu_p^{\epsilon}P\to O$. There is a map $\Phi_O:(U\times \nu_p^{\epsilon}P, U\times \FF_p )\to (O,\FF_O)$ defined in terms of flows of linearized extensions of the coordinate vector fields which is a foliated diffeomorphism. Moreover, the rescaling map given by $r_\lambda(x,y)=(\lambda x, \lambda y)$ for $\lambda\in (0,1)$ is a foliated diffeomorphism of $(U\times \nu_pP, U\times \FF_p)$. Thus, we can pull back the metric from $O\subset M$ via the composition \[U\times \nu_pP\overset{r_\lambda}\longrightarrow U\times \nu_pP\overset{\Phi_O}\longrightarrow O\] and obtain a singular Riemannian foliation $(U\times \nu_p^\epsilon P,g',U\times \FF_p)$, where $g'=\left(\Phi_O\circ r_\lambda \right)^*g$. The same leaves also form a singular Riemannian foliation with the metric $g^\lambda:=\frac{1}{\lambda^2}g'=\frac{1}{\lambda^2}\left(\Phi_O\circ r_\lambda \right)^*g$. As $\lambda\to 0$, the metrics $g^{\lambda}$ converge pointwise to the flat metric $g_p$ on $U\times \nu_p^\epsilon P$. This SRF extends to an SRF on $T_pM$ which is invariant under rescalings and splits as $(T_pP\times \nu_pP, T_pP \times \FF_p)$. We will refer $(\nu_pP,g_p,\FF_p)$ as the \textit{(normal) infinitesimal foliation}. Because rescalings preserve $\FF_p$, we will also use this term to refer to the restriction of $\FF_p$ to the unit normal sphere $S_p^{\perp}$ to $L_p$ at $p$.

\subsection{Holonomy} \label{Hol}
The leaves of $(\nu_p P, \FF_p)$ are diffeomorphic to the intersections of the global leaves of $\FF$ with the normal slice $\exp_p(\nu_p^\epsilon L)$. In general, some of these `local' leaves will belong to the same global leaf and are identified via the \textit{holonomy group} $G_p\subset \textit{\textbf{O}}(\nu_pL_p)$, defined as the group of linearized lifts along $p$-loops (``sliding along a leaf'' in Molino \cite{molino}). Elements of $G_p$ in the identity path component $G_p^0$ will take a point in a local leaf to a point in the same local leaf. For such an element $g$, the existence of a continuous path  $g(s):[0,1]\to G_p$ with $g(0)=g$ and $g(1)=e$ means that for $v\in \nu_p L_p$, $g_s(v)$ traces out a continuous path in a local leaf (it's in a constant leaf of $\FF$ and stays in the slice through $p\in L_p$). Thus, at the level of leaves, we see that the group $G_p^0$ is the ineffective kernel of the action of $G_p$ on $O/\FF_O$. We will refer to the finite group $\Gamma_p:=G_p/G_p^0$ as the \textit{leaf holonomy group of $L_p$}, which acts effectively on the leaf space $O/\FF_O$. \\

Note that the map $\pi_1(L_p)\to \Gamma_p$ sending a homotopy class of loops to its path component in $G_p$ is a surjection. In particular, when $L_p$ is simply connected, the leaf holonomy is trivial. \\

\subsection{Stratification}
A singular Riemannian foliation can be naturally stratified by the dimension of the leaves. We say $\dim(\FF)=k
$ if the maximal dimension among the leaves of $\FF$ is $k$. Leaves of this maximal dimension $k$ are called \textit{regular} leaves. For $d\leq k$, define \[M_{(d)}=\{p\in M\ |\ \dim(L_p)=d\}\] The connected components of $M_{(d)}$ are called the \textit{$d$-strata} and each such component is called a \textit{$d$-stratum}. Each stratum is a submanifold of $M$ and the restriction of $\FF$ to a $d$-stratum is a regular Riemannian foliation. As with principal orbits of group actions, there is only one connected component of regular leaves (the \textit{regular stratum}) and it is open and dense. This stratification is known as the \textit{canonical} (or \textit{coarse}) \textit{stratification}.

Abusing the language a bit, we will sometimes also refer to the image of components of $M_{(d)}$ under the quotient map $M\to M/\FF$ as the $d$-strata and denote them by $\overline{M}_{(d)}$.

\begin{rmk}
There exists a finer stratification by further distinguishing leaves by their leaf holonomy, but we will not make use of this here.
\end{rmk} 

\subsection{Alexandrov Geometry} \label{AG}
The map $\pi:M\to M/\FF$ is an example of a \textit{submetry} (a map which takes metric balls onto metric balls of the same radius). Since lower curvature bounds are local properties and can be described in purely metric terms,  the target of a submetry from a manifold $M$ with $\text{sec}(M)\geq k$ is an Alexandrov space with the same lower curvature bound (in the distance comparison sense). In this section, we interpret some basic Alexandrov geometry concepts in the context of singular Riemannian foliations.  We refer the reader to \cite{BGP} for basic definitions and \cite{Grovevia} for the geometry of orbit spaces.\\

A basic notion in Alexandrov geometry is that of the \textit{space of directions}. The following natural result is what one might expect:

\begin{prop} The space of directions $\Sigma_{\bar{p}}$  at $\bar{p}=\pi(p)\in M/G$  consists of geodesic directions and is isometric to $\left(S_p^{\perp}/\FF_{p}\right)/\Gamma_p$, where $\FF_{p}$ is the restriction of the normal infinitesimal foliation to $S_p^{\perp}$.
\end{prop}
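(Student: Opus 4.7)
The plan is to exhibit a natural surjection $S_p^\perp\to\Sigma_{\bar p}$ carrying unit normal vectors to geodesic directions, identify its fibers using the infinitesimal foliation $\FF_p$ and the holonomy group, and then promote the resulting bijection to an isometry by invoking the rescaling description of $(O,\FF_O)$ from Section \ref{SRFs}. For the first step, since $\pi:M\to M/\FF$ is a submetry from a complete Riemannian manifold, every unit speed geodesic emanating from $\bar p$ lifts to a horizontal (i.e.\ leaf-perpendicular) geodesic from $p$, and transnormality of $\FF$ guarantees that the lift remains horizontal along its entire length. Conversely, every $v\in S_p^\perp$ produces a horizontal geodesic $t\mapsto \exp_p(tv)$, which projects to a unit speed geodesic from $\bar p$. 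Thus $v\mapsto[\pi\circ\exp_p(tv)]$ defines a surjection $\Phi:S_p^\perp\to\Sigma_{\bar p}$ landing in genuine geodesic directions, establishing the claim that no completion is required.

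Next I would compute the fibers of $\Phi$. Two normal vectors $v,w$ produce the same direction at $\bar p$ iff $\exp_p(tv)$ and $\exp_p(tw)$ lie in the same global leaf of $\FF$ for all sufficiently small $t>0$. Pulling back to a distinguished tubular neighborhood $O$, lying in the same plaque of $\FF_O$ is equivalent to $tv$ and $tw$ lying in the same leaf of $\FF_p$ inside $\nu_p^\epsilon P$; rescaling invariance of $\FF_p$ translates this to $v$ and $w$ lying in the same leaf of $\FF_p|_{S_p^\perp}$. Accounting for distinct plaques of a common global leaf introduces the action of the holonomy group $G_p\subset O(\nu_pL_p)$, and by Section \ref{Hol} the identity component $G_p^0$ acts trivially on $O/\FF_O$. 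Hence the effective equivalence on $\FF_p$-classes is by $\Gamma_p=G_p/G_p^0$, yielding a bijection $(S_p^\perp/\FF_p)/\Gamma_p\to\Sigma_{\bar p}$.

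Finally I would upgrade this bijection to an isometry via the infinitesimal model. The tangent cone at $\bar p$ is the pointed Gromov--Hausdorff limit of $(M/\FF,d/\lambda,\bar p)$ as $\lambda\to 0$, with $\Sigma_{\bar p}$ its unit sphere. Using the foliated diffeomorphism $\Phi_O\circ r_\lambda$ together with the pointwise convergence $g^\lambda\to g_p$ recorded in Section \ref{SRFs}, the rescaled quotient spaces converge to $(T_pP\times\nu_pP,g_p)$ modulo $T_pP\times\FF_p$ and then modulo $\Gamma_p$. The $T_pP$ factor forms a single leaf of $T_pP\times\FF_p$ and collapses, leaving the flat cone $(\nu_pP/\FF_p)/\Gamma_p$, whose unit sphere is the claimed space endowed with the induced round quotient metric. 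The main obstacle I foresee is making this last step rigorous: one must verify that the double submetric quotient on $(S_p^\perp/\FF_p)/\Gamma_p$ really coincides with the Alexandrov angle metric at $\bar p$. Concretely, for $v,w\in S_p^\perp$ one must show the comparison angle $\cang(\pi(\exp_p(tv)),\bar p,\pi(\exp_p(sw)))$ converges as $t,s\to 0$ to the quotient distance between the classes of $v$ and $w$; this should follow from the $C^0$ convergence $g^\lambda\to g_p$ together with the definition of a submetric quotient distance as an infimum over preimages, but requires careful tracking of how holonomy interacts with the Euclidean limit.
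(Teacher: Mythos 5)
Your proposal uses the same essential machinery as the paper's proof — the identification $\Phi_O$ with the local model $(U\times\nu_pP,\FF_p)$, the holonomy action of $\Gamma_p$ on the local leaf quotient, and the rescaling limit $g^\lambda\to g_p$ — so the two arguments are fundamentally the same in content. The main organizational difference is that you first build a set-theoretic surjection $S_p^\perp\to\Sigma_{\bar p}$ via $v\mapsto[\pi\circ\exp_p(tv)]$ and compute its fibers, and only afterwards try to match metrics; the paper instead proceeds in the opposite order, first showing that the $\Gamma_p$-action on $O/\FF_O$ is by isometries (via equidistance of leaves) with quotient isometric to an $\epsilon$-ball around $\bar p$, then using that linearized lifts commute with homotheties to see this identification persist through the rescaled metrics $g^\lambda$, and finally passing to the $\lambda\to 0$ limit in an already-isometric picture. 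That ordering is precisely what dissolves the obstacle you flag at the end: because each $(O/\FF_O, g^\lambda)/\Gamma_p$ is already known to be isometric to a rescaled neighborhood of $\bar p$, the limiting cone identification is automatically isometric, and one never has to separately reconcile the submetry quotient distance on $(S_p^\perp/\FF_p)/\Gamma_p$ with the Alexandrov angle metric at $\bar p$. If you want to complete your version as written, that verification is the step that still needs to be carried out; the cleanest way to do so is essentially to import the paper's intermediate observation that the local quotients are isometric to metric balls, at which point your argument collapses into the paper's.
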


\begin{proof}
Identify $(O,g,\FF_O)$ with $(U\times \nu_p P,\Phi_O^*g, \FF_p)$ by pulling back both the foliation and the metric via $\Phi_O:U\times \nu_p P\to O$. Consider the action of $\Gamma_p=G_p/G_p^0$ on $O/\FF_O$ described as follows: take an element $g_\gamma\in \Gamma_p$ (represented by the `end-point map' of linearized lifts of normal vectors along the $p$-loop $\gamma$) and a leaf $L$ of $\FF_O$ (i.e. an element of $O/\FF_O$). Choose a point $q=\exp_p(v)$ in this leaf, so that $L=L_{\exp_p(v)}$. Then $L_{\exp_p(V(1))}$, where $V(t)$ is a linearized lift of $v$ along $\gamma$, is also a leaf of $\FF_O$, hence an element of $O/\FF_O$. So define $g_\gamma\cdot L_{\exp_p(v)}:= L_{\exp_p(V(1))}$

Since linearized lifts along loops carry leaves to leaves, our choice of $q\in L_{\exp_p(v)}$ is irrelevant at the level of leaves (provided $q$ is in the slice through $p$). So the above action is well defined (and effective, as we saw earlier). By equidistance of leaves of $\FF$ (and $\FF_O$) it follows that this action is by isometries.

Note that if $O$ is a radius $\epsilon$ tubular neighborhood of $P\subset L_p$, then the orbit space $(O/\FF_O)/\Gamma_p$ is isometric to an $\epsilon$-neighborhood of $\bar{p}\in M/\FF$. Moreover, since linearized lifts commute with homothetic transformations (lifts flow vertically along leaves while homotheties are horizontal), it follows that $\Gamma_p$ acts by isometries on the leaves of $(O,r_\lambda^* g, \FF_O)$ with quotient isometric to a $\lambda$-rescaling of the $\epsilon$-neighborhood above. Moreover, $\Gamma_p$ acts on the leaves $(O,g^\lambda, \FF_O)$, where $g^\lambda$ is as defined in section \ref{SRFs}. In the limit as $\lambda\to 0$, we obtain an action of $\Gamma_p$ on the tangent cone $T_{L_p}(O/\FF_O)$ with quotient isometric to the tangent cone $T_{\bar{p}}(M/\FF)$. Since the leaf space $O/\FF_O$ (for any of the rescaled metrics) consists only of normal directions (to $L_p$ at $p$), in the limit, we get an action of $\Gamma_p$ on the normal cone to $L_p$ whose orbit space is isometric to $T_{\bar{p}}(M/\FF)$. Since the normal cone upstairs is the cone on $(S_p^{\perp}/\FF_p)/\Gamma_p$, the tangent cone downstairs is the cone on $\Sigma_{\bar{p}}$, and all these directions are geodesic (by the slice theorem) the result follows. 
\end{proof}

\begin{rmk}
In his thesis, Marco Radeschi proved that Singular Riemannian foliations of spheres decompose as a join $(S_p^{\perp},\FF_p)\cong(SF_p,\FF_0)*(SF_p^{\perp},\FF_1)$, where $SF_p$ is a (totally geodesic) subsphere foliated by points and $\FF_1$ contains no point leaves. Both subspheres are invariant under the action of $\Gamma_p$ and we refer to $SF_p/\Gamma_p$ as the \textit{space of tangent directions} to the stratum of $\bar{p}$ and $(SF_p/\FF_1)/\Gamma_p$ as the \textit{space of normal directions} to the stratum.\\
\end{rmk}

Another important notion for Alexandrov spaces is that of the \textit{boundary}, which for an Alexandrov space $X$ is defined inductively as the collection of points for which the space of directions (itself an Alexandrov space) has boundary, \[\bdy X:=\{x\in X:\bdy\Sigma_x\neq\emptyset\}.\]

An extreme case is when the space of normal directions to the stratum of $\bar{p}$ is a single point. In such a case, the total space of directions $\Sigma_{\bar{p}}$ will have boundary $SF_p/\Gamma_p$. This happens in two cases:
\begin{enumerate}
\item When the infinitesimal foliation on the normal space to the stratum is by concentric spheres (i.e. when the foliation $\FF_p$ of $SF_p^{\perp}$ is by a single leaf).
\item When $SF_p^{\perp}=S^0$ and $\Gamma_p=\mathbb{Z}_2$. In which case, the infinitesimal foliation is still by concentric spheres (in a sense).
\end{enumerate}

\begin{definition}
Let $\bar{p}\in M/\FF$ be a point and $p\in \pi^{-1}(\bar{p})\subset M$. A connected components of the stratum of $\bar{p}$ is said to be an (open) \textit{boundary face} if the infinitesimal foliation of the normal space to the (dimension) stratum of $p$ is a foliation by concentric spheres .
\end{definition}

Intuitively, the boundary faces are those components of a stratum for which the only direction normal to the stratum is toward the interior. One may also define the open boundary faces as those components of strata for which the tangent cone splits as the tangent space to the stratum and a single normal ray. This hints at yet another definition of open boundary faces as the codimension 1 strata of an Alexandrov space.

As one might expect, these definitions of boundary face agrees with the usual one given for orbit spaces. That is, in the case of homogeneous SRFs, the boundary faces are those components of strata for which the isotropy action on the spheres normal to the stratum is transitive (see ~\cite{Grovevia}). As in that case, we also have

\begin{prop}
The boundary $\bdy (M/\FF)$ is the closure of the boundary faces.
\end{prop}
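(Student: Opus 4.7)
The plan is to prove both containments separately: (i) every point of a boundary face lies in $\bdy(M/\FF)$, which, since $\bdy(M/\FF)$ is closed, gives one inclusion; and (ii) every boundary point of $M/\FF$ is a limit of points from boundary faces.

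For (i), fix $\bar p$ in a boundary face. Combining the preceding proposition with the join decomposition noted in the remark, the space of directions factors as
\[
\Sigma_{\bar p} \;\cong\; \bigl(SF_p/\Gamma_p\bigr) * \bigl((SF_p^{\perp}/\FF_1)/\Gamma_p\bigr).
\]
The boundary-face hypothesis forces the second join factor to collapse to a single point (the restriction of a concentric-sphere foliation to the unit sphere has only one leaf), so $\Sigma_{\bar p}$ is a spherical cone over the space of tangent directions $SF_p/\Gamma_p$. The standard boundary calculus for Alexandrov joins gives $SF_p/\Gamma_p\subseteq\bdy\Sigma_{\bar p}$, which is nonempty whenever $SF_p/\Gamma_p$ is; the degenerate case of an isolated $0$-dimensional stratum (case $(2)$ before the definition) is handled by direct inspection, since the tangent cone at $\bar p$ is literally a ray, making $\bar p$ an endpoint in $M/\FF$. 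Either way $\bar p\in\bdy(M/\FF)$, and by closedness of the boundary the closure of all boundary faces lies in $\bdy(M/\FF)$.

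For (ii), I would induct on $\dim(M/\FF)$. Take $\bar p\in\bdy(M/\FF)$. If its stratum is a boundary face there is nothing to show, so assume the normal-directions factor $(SF_p^{\perp}/\FF_1)/\Gamma_p$ has positive dimension. Since $\bdy\Sigma_{\bar p}\neq\emptyset$ and the join-boundary formula concentrates $\bdy\Sigma_{\bar p}$ on the boundaries of the two join factors, nontrivial boundary must appear on the normal factor. The infinitesimal quotient $(S_p^{\perp}/\FF_p)/\Gamma_p$ is itself the quotient of an SRF on a lower-dimensional sphere by a finite group, so the inductive hypothesis identifies its boundary with the closure of its own boundary faces. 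The essential point is then that each such infinitesimal boundary face corresponds, via the foliated diffeomorphism $\Phi_O$ from Section~\ref{SRFs} and the slice-theorem identification of a distinguished tubular neighborhood of $L_p$ with a neighborhood of the vertex of the tangent cone, to a bona fide boundary face of $M/\FF$ meeting every neighborhood of $\bar p$. Hence $\bar p$ is a limit of boundary-face points.

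The principal obstacle is this last transfer: checking that a concentric-spheres normal infinitesimal foliation on a stratum of the infinitesimal picture survives the passage through $\Phi_O$ to become a concentric-spheres normal infinitesimal foliation on the corresponding stratum of $\FF$ in the tube around $L_p$. Functoriality of the normal infinitesimal foliation under foliated diffeomorphisms makes this plausible, but some care is required in the presence of nontrivial leaf holonomy $\Gamma_p$, since one must track how the ambient $\Gamma_p$-action and the infinitesimal $\Gamma_p$-actions at nearby strata fit together.
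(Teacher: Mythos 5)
Your direction (i) is fine and is essentially a fleshed-out version of the paper's one-line ``clearly'': the join factor of normal directions collapses to a point, the space of directions becomes a spherical cone, and a cone has boundary. Direction (ii), however, takes a genuinely different route from the paper's, and it has a real gap -- the one you yourself flag. You induct on $\dim(M/\FF)$ and attempt to transport boundary faces of the infinitesimal quotient $(S_p^{\perp}/\FF_p)/\Gamma_p$ back to boundary faces of $M/\FF$ near $\bar p$ via $\Phi_O$. That transfer is the whole content of your step, and you do not establish it: you must show that the ``normal infinitesimal foliation by concentric spheres'' condition detected on a stratum of the infinitesimal quotient really does hold for the corresponding stratum of $\FF$ in the tube around $L_p$, and you must reconcile the holonomy group $\Gamma_p$ at $p$ with the holonomy groups at nearby points in other strata. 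Plausibility by ``functoriality'' is not a proof.

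The paper sidesteps the infinitesimal-to-global transfer entirely by arguing at the level of $M/\FF$ with a finite descent over strata. Given $\bar p\in\bdy(M/\FF)$, one looks at directions normal to the stratum of $\bar p$. If all of them point into the regular stratum, $\bar p$ already lies in a boundary face. Otherwise some normal direction points into a less singular, nonregular stratum; pick a nearby point $\bar q$ in such a stratum and repeat the dichotomy at $\bar q$. Since there are only finitely many strata, this descent terminates in a boundary face, and each step can be taken arbitrarily close to $\bar p$, so $\bar p$ lies in the closure of the boundary faces. This route costs nothing about join-boundary formulas or infinitesimal identifications; it only uses that normal directions go to strictly less singular strata, that the regular stratum is open and dense, and that the stratification is finite. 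Your approach could in principle be completed, but the transfer lemma is a nontrivial piece of work that the paper's argument simply avoids.
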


\begin{proof}
Clearly, a point in the closure of a boundary face cannot be interior. On the other hand, let $\bar{p}$ be an arbitrary point of $\bdy M/\FF$. Since nearby points in directions normal to the stratum of $\bar{p}$ are necessarily into less singular strata and the regular stratum is dense, there are directions normal to the stratum of $\bar{p}$ into the regular stratum.\\
If every direction normal to the stratum of $\bar{p}$ is toward the regular stratum, then there are no less singular boundary stratum. Thus, there is only one such normal direction and $\bar{p}$ belongs to a boundary face. Otherwise, at least one normal direction is into a less singular, nonregular stratum. Let $\bar{q}$ be a point close to $\bar{p}$ in such a direction. Then as above, either $\bar{q}$ belongs to a boundary face, or it does not, in which case, we continue on. Because there are only finitely many strata, this process ends in a boundary face. This shows that any neighborhood of $\bar{p}$ intersects a boundary face, completing the proof. 
\end{proof}

\section{Point Leaf Maximal SRFs} \label{PLM}
We will now generalize results from \cite{Searlecirc} and \cite{Searlediff} to the setting of singular Riemannian foliations, guided by the ideas and tools developed there. We begin with a general statement concerning the structure of positively curved leaf spaces with boundary. This is particularly useful when the boundary is smooth, as it implies that the only singularities occur in a boundary and possibly at the ``soul''. We then define a special type of SRF which ensures that this boundary is smooth and we see that manifolds admitting these special SRFs decompose as a union of disc bundles about ``nice'' submanifolds, which we then use to recover the cohomology of such manifolds.

\subsection{Structure} \label{Str}

\begin{lemma}[Stratum Lemma]
Let $(M,\FF)$ be a singular Riemannian foliation with closed leaves such that $M/\FF$ is positively curved with nonempty boundary $\bdy(M/\FF)$. Then 
\begin{enumerate}
\item[(i)] there is a unique point $\bar{s}\in M/\FF$ (the soul) at maximal distance from the boundary $\bdy M/\FF$.
\item[(ii)] the closure of any interior singular stratum contains both the soul point and points on the boundary, or is the soul point alone.
\end{enumerate}
\end{lemma}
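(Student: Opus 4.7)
The plan is as follows. For part (i), I would invoke the Alexandrov-geometric fact (going back to Perelman) that on a compact, positively curved Alexandrov space with non-empty boundary, the distance-to-boundary function $f=d(\cdot,\bdy(M/\FF))$ is strictly concave along geodesics. Combined with continuity and compactness of $M/\FF$, strict concavity forces $f$ to attain its maximum at a unique point, which we declare to be the soul $\bar{s}$. A direct Toponogov argument is also possible, comparing the triangle $\bar{s}_1\bar{s}_2\bar{b}$ for a putative second maximizer $\bar{s}_2$ and a nearest boundary point $\bar{b}$, but the strict concavity packaging is cleaner and more economical.

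For part (ii), the key ingredients are: (a) the image in $M/\FF$ of each stratum of $(M,\FF)$ is an extremal subset of the Alexandrov space $M/\FF$, exactly as in the homogeneous case; and (b) the gradient flow $\phi_t$ of the semiconcave function $f$ preserves extremal subsets, and every trajectory converges as $t\to\infty$ to the unique critical point of $f$, namely $\bar{s}$. Since $\overline{S}$ is a finite union of strata, hence extremal, it is $\phi_t$-invariant; any orbit in $\overline{S}$ accumulates at $\bar{s}$, forcing $\bar{s}\in\overline{S}$. If $\dim S=0$, then $\overline{S}$ is a single point that must be fixed by $\phi_t$, i.e.\ critical for $f$, and therefore equal to $\bar{s}$; this is the ``soul alone'' alternative. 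If $\dim S\geq 1$, then $\overline{S}$ has positive dimension, and I would appeal to a Frankel-type intersection theorem for extremal subsets in positive curvature: two extremal subsets of $M/\FF$ whose dimensions sum to at least $\dim(M/\FF)$ must intersect. Applied to $\overline{S}$ and $\bdy(M/\FF)$ (the latter of codimension $1$), this yields $\overline{S}\cap\bdy(M/\FF)\neq\emptyset$, completing the dichotomy.

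The main obstacle will be the Frankel-type step for extremal subsets. If a clean citation is not at hand, one can prove it directly by taking $\bar{p}\in\overline{S}$ and $\bar{b}\in\bdy(M/\FF)$ realizing $d(\overline{S},\bdy(M/\FF))$, and then analyzing the minimizing geodesic $\bar{p}\bar{b}$: its initial direction is normal to $\overline{S}$ at $\bar{p}$ and its terminal direction is normal to $\bdy(M/\FF)$ at $\bar{b}$, and a Toponogov hinge comparison in strictly positive curvature rules out such a configuration once $\dim \overline{S}+\dim \bdy(M/\FF)\geq\dim(M/\FF)$. The remaining ingredients (strict concavity of $f$, extremality of SRF strata, convergence of Perelman's gradient flow to the critical set) are standard and can be imported from the Alexandrov-geometric references cited in Section~\ref{AG}.
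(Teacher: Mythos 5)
Your treatment of part (i) matches the paper exactly: strict concavity of $d(\bdy(M/\FF),\cdot)$ forces a unique maximizer.

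For part (ii), your route is genuinely different from the paper's and, as written, has a real gap. The paper never invokes extremal subsets or a Frankel theorem. Instead it establishes the angle claim $\ang\bar{p}^{\bar{x}}_{\bar{s}}>\pi/2$ (a Toponogov hinge comparison against convexity of $d(\bdy,\cdot)$ along the geodesic to $\bar{s}$), deduces that $\text{diam}(\Sigma_{\bar{p}})>\pi/2$ for every interior non-soul $\bar{p}$, and then cites the GMP lemma that an SRF on a round sphere whose quotient has diameter $>\pi/2$ is a suspension. This forces point leaves at antipodal normal directions, so the stratum through $\bar p$ extends both toward $\bar{s}$ and toward $\bdy(M/\FF)$; compactness of the stratum closure plus finiteness of the stratification lets one iterate toward the boundary. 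Your approach replaces this concrete infinitesimal analysis with soft Alexandrov machinery, which would be elegant if the ingredients were all available, but two are not clearly so.

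First, the assertion that closures of strata of an SRF are extremal in $M/\FF$ is plausible (and known in the homogeneous case), but it is not something the paper uses or proves, and you would need to supply a reference or argument for general closed-leaf SRFs before leaning on gradient-flow invariance. Second, and more seriously, there is no off-the-shelf Frankel theorem for arbitrary extremal subsets of a positively curved Alexandrov space, and your proposed replacement — that a Toponogov hinge comparison at the endpoints of a segment normal to both $\overline{S}$ and $\bdy(M/\FF)$ ``rules out such a configuration'' when $\dim\overline{S}+\dim\bdy(M/\FF)\geq\dim(M/\FF)$ — does not in fact follow from Toponogov. Even in the smooth setting Frankel requires a second-variation argument using parallel transport along the connecting geodesic to produce a variation field tangent to both ends; angle/hinge comparison alone says nothing about the dimension count. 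In Alexandrov geometry this step is precisely where one needs quasi-geodesics and parallel-transport-like constructions that are delicate and, for extremal subsets that are not boundaries, not established in the literature you cite. Until that step is filled in, the boundary half of the dichotomy is unproven. The paper avoids it entirely; the angle claim you have at hand does the work of the suspension lemma, not of a Frankel theorem, and that is the mechanism the proof actually needs.
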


\no Let $\bar{p}\in M/\FF$ be an interior, non-soul point. Let $\gamma(t)$ be a unit speed minimal geodesic from $\bar{p}$ to $\bar{s}$ and $\bar{x}\in \bdy M/\FF$ be a point realizing $\text{dist}(\bdy M/\FF,\bar{p})$.\\

\begin{claim}
The angle $\ang \bar{p}_{\bar{s}}^{\bar{x}}$ is strictly greater than $\pi/2$.
\end{claim}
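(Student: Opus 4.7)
The plan is to combine the uniqueness of the soul from (i) with a first-variation/hinge argument at $\bar p$, using $\bar x$ as an anchor for the boundary. Set $r := \text{dist}(\bar p, \bar x) = \text{dist}(\bar p, \bdy(M/\FF))$, $R := \text{dist}(\bar s, \bdy(M/\FF))$, and $L := \text{dist}(\bar p, \bar s)$; part (i) gives $R > r$.

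The key ingredient I will use is that the distance-to-boundary function $d_\bdy := \text{dist}(\cdot, \bdy(M/\FF))$ is concave along geodesics of a nonnegatively curved Alexandrov space with boundary---a standard consequence of Alexandrov comparison, applicable here since $M/\FF$ is positively curved. Consequently $f(t) := d_\bdy(\gamma(t))$ is concave on $[0, L]$ with $f(0) = r < R = f(L)$, so $f$ lies above its chord and its right derivative satisfies
\[
f_+'(0) \ \geq \ \frac{R - r}{L} \ > \ 0.
\]

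Next, since $\bar x \in \bdy(M/\FF)$, we have $d(\gamma(t), \bar x) \geq f(t)$ for all $t$, with equality at $t = 0$; dividing by $t$ yields
\[
\liminf_{t\to 0^+} \frac{d(\gamma(t), \bar x) - r}{t} \ \geq \ f_+'(0) \ > \ 0.
\]
On the other hand, Toponogov's hinge comparison at $\bar p$ applied to the hinge formed by the initial segments of $\gamma$ (of length $t$) and of a minimal segment to $\bar x$ (of length $r$) enclosing the angle $\theta := \ang \bar p_{\bar s}^{\bar x}$ yields $d(\gamma(t), \bar x)^2 \leq t^2 + r^2 - 2tr\cos\theta$; Taylor-expanding the square root at $t = 0$ gives
\[
\limsup_{t\to 0^+} \frac{d(\gamma(t), \bar x) - r}{t} \ \leq \ -\cos\theta.
\]
Combining these two inequalities forces $-\cos\theta > 0$, i.e.\ $\theta > \pi/2$, which is the claim. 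The step most likely to require care is justifying the concavity of $d_\bdy$ on the quotient $M/\FF$, a Perelman-type result for Alexandrov spaces of curvature bounded below with boundary, which should be available from the Alexandrov geometry references cited in \ref{AG}.
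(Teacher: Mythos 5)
Your argument is correct and follows essentially the same route as the paper: both proofs combine the concavity of $\mathrm{dist}(\cdot,\bdy(M/\FF))$ along $\gamma$ (forcing its derivative at $\bar p$ to be positive, since $\bar s$ is strictly farther from the boundary) with a hinge comparison at $\bar p$ relating the angle to the first variation of $\mathrm{dist}(\cdot,\bar x)$, via the inequality $\mathrm{dist}(\cdot,\bar x)\geq d_\bdy$ with equality at $\bar p$. The only cosmetic difference is that you invoke the Euclidean law-of-cosines form of Toponogov and argue directly, whereas the paper sets up an explicit spherical comparison hinge and argues by contradiction.
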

\begin{proof}[Proof of claim]
Suppose otherwise and consider the comparison hinge (the configuration in $S^2$ with $|\tilde{p}\tilde{x}|=|\bar{p}\bar{x}|$, $|\tilde{p}\tilde{s}|=|\bar{p}\bar{s}|$, and $\ang \tilde{p}_{\tilde{s}}^{\tilde{x}}=\ang\bar{p}_{\bar{s}}^{\bar{x}}$) Let $\tilde{\gamma}(t)\in S^2$ be the unit speed geodesic from $\tilde{p}$ to $\tilde{s}$. We will derive a contradiction by analyzing and comparing the behavior at $t=0$ of the following functions:
\begin{align*}
d_{\bdy}(t) 			&:= \text{dist}(\bdy M/\FF,\gamma(t))\\
d_{\bar{x}}(t) 	    	&:= \text{dist}(\bar{x}, \gamma(t))\\
\tilde{d}_{\tilde{x}}(t)&:= \text{dist}(\tilde{x}, \tilde{\gamma}(t))
\end{align*}
It is easy to see that if $\ang \tilde{p}_{\tilde{s}}^{\tilde{x}}=\ang\bar{p}_{\bar{s}}^{\bar{x}}\leq \pi/2$, then $\frac{d \tilde{d}_{\tilde{x}}}{dt}|_{t=0}\leq 0$. Thus for any fixed positive real number $c$, there is an $\epsilon>0$ sufficiently small such that \[\frac{\tilde{d}_{\tilde{x}}(\epsilon)-\tilde{d}_{\tilde{x}}(0)}{\epsilon}<c.\] In particular, there exists such an $\epsilon$ so that \[\frac{\tilde{d}_{\tilde{x}}(\epsilon)-\tilde{d}_{\tilde{x}}(0)}{\epsilon}<b-a\] where $b=d_{\bar{x}}(1)=|\bar{s}\bar{x}|$ and $a=d_{\bar{x}}(0)=d_{\bdy}(0)=|\bar{p}\bar{x}|$.\\
By the hinge version of the Alexandrov comparisons, $\tilde{d}_{\tilde{x}}(t)\geq d_{\bar{x}}(t)$, and since $\bar{x}\in \bdy M/\FF$, $d_{\bar{x}}(t)\geq d_{\bdy}(t)$. So it follows that, \[\frac{d_{\bdy}(\epsilon)-d_{\bdy}(0)}{\epsilon}< b-a \leq d_{\bdy}(1)-d_{\bdy}(0)\] contradicting the convexity of $d_{\bdy}(t)$ along $\gamma$ and proving the claim.
\end{proof}

\begin{proof}[Proof of lemma]
The first statement follows from the strict concavity (along geodesics not minimal to the boundary) of the function $\text{dist}(\bdy M/\FF, \cdot)$ for positively curved Alexandrov spaces. For the second statement, we note that the claim implies that the space of directions $\Sigma_{\bar{p}}$ has diameter $>\pi/2$. Since $\Sigma_{\bar{p}}$ is itself isometric to the quotient of an SRF on a sphere (the infinitesimal foliation on normal sphere to $L_p$ at a point $p\in \pi^{-1}(\bar{p})$), it follows that this infinitesimal foliation is a suspension (see \cite{GMP}, Lemma 1.1). In particular, the infinitesimal foliation contains point leaves at distance $\pi$. Since point leaves in the normal space $\nu_pL_p$ represent leaves in the same dimensional stratum as $L_p$, it follows that the stratum of $\bar{p}$ extends toward both $\bdy (M/\FF)$ and $\bar{s}$. Since the closure of the stratum of $\bar{p}$ is compact, there is a (possibly non-unique) point $\bar{p}'$ in this closure nearest to $\bdy(M/\FF)$. Note that $\bar{p}'$ belongs to at least as singular a stratum as $\bar{p}$ (i.e. a stratum of leaves of lower dimension). The same argument shows that the closure of the stratum of $\bar{p}'$ also extends toward $\bdy M/\FF$. Because there are only finitely many strata, continuing in this manner will necessarily produce a point, the closure of whose stratum will contain points of $\bdy M/\FF$. Of course, these points also belong to the closure of the original stratum (that of $\bar{p}$), which shows that all interior strata ``stretch'' from $\bar{s}$ to $\bdy M/\FF$.
\end{proof}

As mentioned in the beginning of this section, when $\bdy(M/\FF)$ is a smooth manifold, the lemma gives that the only possible interior singularity is the soul point itself. Moreover, there are no critical points for $\text{dist}(\bdy(M/\FF,\cdot))$ in $M/\FF - (\bar{s}\cup \bdy(M/\FF))$. One can then use gradient-like vector fields to recover topological information about the manifold $M$. Guaranteeing that $\bdy(M/\FF)$ is a smooth manifold can be thus be viewed as one possible motivation for the following:

\begin{definition}
A singular Riemannian foliation with closed leaves is said to be \textit{point leaf maximal} if the infinitesimal foliation on the normal spheres to a (component of) the point leaf stratum are trivial, single leaf foliations.
\end{definition}

\no We denote by $\Sigma_0$ the set of point leaves of $(M,\FF)$ and by $F_0\subset \Sigma_0$, a component of $\Sigma_0$ which meets the condition of the definition above.

\begin{rmk}
By definition, $F_0$ (as a subset of $M/\FF$, into which it embeds isometrically) is a codimension 1 stratum of the Alexandrov space $M/\FF$. Since $\bdy (M/\FF)$ is the closure of the codimension 1 strata, it follows that $F_0$ is a boundary component. Also, when $\dim(M/\FF)>1$ and $M/\FF$ is positively curved, the boundary is connected. So, with the exception of codimension 1 SRFs, it is convenient to think of point leaf maximal foliations as those for which $F_0=\partial M/\FF$. In the case that $\dim(M/\FF)=1$, $M/\FF$ is an interval and $F_0$ is one of the two endpoints. 
\end{rmk}

\begin{theorem}[Structure Theorem]
Let $(M,\FF)$ be a point leaf maximal singular Riemannian foliation such that $M/\FF$ is positively curved. If $F_0$ is a component of $\Sigma_0$ with maximal dimension, then the following hold:
\begin{enumerate}
\item[(i)] There is a unique `soul' leaf, $L_s$, at maximal distance to $F_0$.
\item[(ii)] All leaves in $M-\left(F_0\cup L_s\right)$ are principal and diffeomorphic to $S^k$, the normal sphere to $F_0$.
\item[(iii)] The infinitesimal foliation $\left(S^{\ell},\nu_s\FF_|\right)$ of the normal sphere $S^\ell$ to $L_s$ at $s$ is a (principal) Riemannian foliation. Moreover, $F_0$ is diffeomorphic to $\left(S^\ell/\nu_s\FF_|\right)/\Gamma_s$, the space of directions at $\bar{s}$.
\item[(iv)] $M/\FF$ is homeomorphic to a cone on $F_0$. 
\item[(v)] $M$ is foliated diffeomorphic to the union of two normal disc bundles $D(F_0), D(L_s)$ (viewed as tubular neighborhoods in $M$) glued along their common boundary.
\end{enumerate}
\end{theorem}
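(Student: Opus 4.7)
The plan is to extract (i) and (ii) from point-leaf maximality together with the Stratum Lemma, deduce (iii) as a consequence of (ii), and then prove (iv) and (v) by constructing a gradient-like vector field for $\text{dist}(\bar{s},\cdot)$ on $M/\FF$ and lifting it horizontally to $M$.

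For (i) and (ii), point-leaf maximality means that a distinguished tubular neighborhood of any plaque $P\subset F_0$ is foliated-diffeomorphic to $P\times \nu^\epsilon_p F_0$ with leaves the concentric normal spheres $\{x\}\times S^k_r$. Hence $F_0$ is a codimension-one stratum of $M/\FF$ and therefore sits inside $\bdy(M/\FF)$; when $\dim(M/\FF)>1$ the boundary of a positively curved Alexandrov space is connected, so in fact $F_0=\bdy(M/\FF)$ (the case $\dim(M/\FF)=1$ is immediate, $M/\FF$ being an interval). The Stratum Lemma (i) then gives a unique point $\bar{s}$ at maximal distance from $F_0$, and we set $L_s:=\pi^{-1}(\bar{s})$. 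The tubular description above also shows that every leaf off $F_0$ in a neighborhood of $F_0$ is principal and diffeomorphic to $S^k$. If some interior singular stratum $\mathcal{S}$ distinct from $F_0$ and $\{\bar{s}\}$ existed, then by the Stratum Lemma (ii) its closure would contain a point of $F_0$, contradicting the previous sentence. Connectedness and openness of the principal stratum then give (ii). For (iii), the same observation shows that every leaf in a punctured neighborhood of $L_s$ is principal, so the infinitesimal foliation $(S^\ell,\nu_s\FF)$ has all leaves of the same maximal dimension and is therefore a regular (principal) Riemannian foliation. The earlier Proposition identifies $\Sigma_{\bar{s}}$ isometrically with $(S^\ell/\nu_s\FF)/\Gamma_s$, and the identification with $F_0$ will drop out of the flow argument below.

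For (iv) and (v), construct a gradient-like vector field $\bar{X}$ for $\text{dist}(\bar{s},\cdot)$ on $M/\FF$ that is smooth and nonvanishing off $\bar{s}$, tangent to $F_0$, radial with respect to the local cone structure at $\bar{s}$, and transverse to the metric collar of $F_0$. Because the only non-principal interior stratum is $\{\bar{s}\}$ and $F_0$ is a closed smooth submanifold, the usual inductive patching over strata collapses to a single-step extension via a partition of unity. The flow of $\bar{X}$ realizes $M/\FF$ as the cone on $F_0$ with cone point $\bar{s}$, proving (iv) and also identifying $F_0$ with $\Sigma_{\bar{s}}$, which finishes (iii). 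Lifting $\bar{X}$ horizontally to the foliated manifold $M\setminus(L_s\cup\pi^{-1}(F_0))$ produces a smooth foliated field whose flow is a foliated diffeomorphism between the complement of small tubular neighborhoods of $L_s$ and $\pi^{-1}(F_0)$ and a cylinder on their common boundary sphere bundle. Gluing in the normal disc bundles $D(L_s)$ and $D(F_0)$ along this common boundary yields (v).

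The main technical obstacle is the construction of $\bar{X}$ with the required radial and tangency properties, together with the verification that its horizontal lift extends smoothly up to the boundaries of the removed tubes. Both are handled by the slice theorem for SRFs and the local cone description of the infinitesimal foliation recalled in Section \ref{SRFs}: the former makes the horizontal lift unambiguous on each stratum, while the latter makes the behavior of $\bar{X}$ near $\bar{s}$ and near $F_0$ conform to a standard cone and collar, respectively.
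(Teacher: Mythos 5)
Your approach is essentially the same as the paper's: (i) and (ii) follow from the Stratum Lemma together with point-leaf maximality, the first half of (iii) follows from (ii), and the rest is obtained by flowing a gradient-like vector field for $\text{dist}(\bar{s},\cdot)$ (radial near both $\bar{s}$ and $F_0$) and its horizontal lift to $M$. One small slip to correct: near $F_0$ the field must be \emph{radial} (i.e.\ transverse to $F_0$), not tangent to $F_0$ as you first write---tangency to lower strata is only needed in the general stratified patching construction, which collapses here precisely because point-leaf maximality forces $\{\bar{s}\}$ to be the only interior singular stratum.
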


\begin{proof}
Statement (i) carries over from the stratum lemma, viewed ``upstairs'' in $M$. Statement (ii) follows from the stratum lemma and the definition of point leaf maximal. The first part of statement (iii) follows from statement (ii). The remaining statements are now proved altogether by constructing a gradient-like vector field for $\text{dist}(\bar{s},\cdot)$ which is radial near both $\bar{s}$ and $\bdy(M/\FF)\cong F_0$. With a suitable rescaling, we arrive at a vector field on $M/\FF$, the flow of which provides the both the diffeomorphism of (iii) and the homeomorphism of (iv). One sees statement (v) by lifting this vector field horizontally to $M$ and removing tubular neighborhoods about the soul leaf $L_s$ and the $F_0$, giving a manifold (in $M$) with two boundary components and a gradient-like vector field radial to both of them (a Morse isotopy type argument).\\  

\no Now take a gradient-like vector field for $\text{dist}(\bar{s},\cdot)$ on $M/\FF$ (call it $X$) and consider its restriction to \[\widehat{M}:=\{\bar{p}\in M/\FF\ |\ \text{dist}(\bar{s},\bar{p})\geq \epsilon\}\] 
$\widehat{M}$ is a manifold with two (smooth) boundary components, one of which $\bdy M/\FF=F_0$ and the other is the boundary of a small metric ball about $\bar{s}$, which is isometric to space of directions $\left(S^l/\nu_s\FF_|\right)/\Gamma_s$ (see the first proposition of \ref{AG}). Since $\text{dist}(\bar{s},\cdot)$ has no critical points in $\widehat{M}$ and $X$ is clearly radial to both boundary components, the flow this vector field gives the diffeomorphism in the second part of statement (iii). In fact, taking the flow all the way to $\bar{s}$, we get statement (iv).\\

\no For the last statement, we further restrict the gradient-like vector field $X$ to \[\widehat{M}':=\{\bar{p}\in M/\FF \ | \ \text{dist}(\bar{s},\bar{p})\geq \epsilon \ \& \ \text{dist}(\bdy(M/\FF),\bar{p})\geq \epsilon\}\] For sufficiently small $\epsilon$, $X$ is still radial to both its boundary components. Note that the unique horizontal lift of $X$ to $\pi^{-1}(\widehat{M}')$ is gradient-like for $\text{dist}(L_s,\cdot)$ and still critical point free. Moreover, $\pi^{-1}(\widehat{M}')$ has two boundary components, which are the (smooth) boundaries of tubular neighborhoods of $L_s$ and $F_0$, respectively. The flow of this lift gives the diffeomorphism of (v). That this diffeomorphism is foliated follows from the homothetic transformation lemma and the definition of point leaf maximal.
\end{proof}

\begin{rmk}
We assume above that $M/\FF$ is positively curved (which guarantees the soul and that $F_0\cong \bdy M/\FF$). However, this doesn't make sense when $\dim(M/\FF)=1$. In that case, one must decide what is meant by the ``soul.'' Certainly, $F_0$ will be one of the endpoints of the interval $M/\FF$. If we call the other endpoint the soul, then everything follows as above.
\end{rmk}

\subsection{The Submanifolds $L_s$ and $F_0$} \label{sbmfd}
For a point leaf maximal SRF $(M,\FF)$, the submanifolds $L_s$ and $F_0$ are in some sense ``dual'' in $M$. We wish to classify what these two submanifolds can be, then, given the decomposition $M=D(L_s)\cup D(F_0)$, determine which positively curved manifolds $M$ can appear. To begin, we exhibit fiber sequences in which $L_s$ and $F_0$ appear.\\

Recall that the leaves of the infinitesimal foliation on $S^\ell$ are (diffeomorphic to) the intersections of the global leaves of $\FF$ with $\exp_s(S^\ell)$. Explicitly, they are obtained by restricting to $S^\ell$ the pullback foliation by the map $\exp:\nu_sL_s\to O$, where $O$ is a distinguished tubular neighborhood of $L_s$. In the case of point leaf maximal SRFs, these leaves are all principal and the generic leaf is $\mathcal{L}=\exp_s^{-1}(\exp_s\left(S_{\epsilon}^{\ell})\cap L_p\right)$, where $L_p$ is a nearby leaf at fixed distance $\epsilon$ from $L_s$. Moreover, $S^\ell/\nu_s\FF_|$ (or $S^\ell/\FF_|$ for short) is a manifold with quotient map a Riemannian submersion, giving the following fibration \[
\mathcal{L}\longrightarrow S^\ell\longrightarrow S^\ell/\FF_|\]

\no Riemannian submersions from Euclidean spheres are classified up to metric congruence: they are exactly the Hopf fibrations (see \citep{gromoll1988low}, \citep{wilking2001index}). In particular, for nontrivial such submersions, $\mathcal{L}$ is a standard 1-, 3-, or 7-sphere and $S^\ell/\FF_|$ is  $\mathbb{CP}^n, \mathbb{HP}^n$ or $\mathbb{OP}^2$ with their standard Fubini-Study metrics.\\

The quotient of the \textit{local quotient} $S^\ell/\FF_|$ by the holonomy action of $\Gamma_s$ identifies the leaves of $\nu_s\FF_|$ belonging to the same global leaf, and is isometric to the space of directions at $\pi(L_s)\in M/\FF$ (which is in turn diffeomorphic to $F_0$). So in the case that $\Gamma_s$ acts trivially, we have
\begin{equation} \label{l}
\mathcal{L}\longrightarrow S^\ell \longrightarrow F_0 \tag{$\ell$}
\end{equation}
\no We address the case when $\Gamma_s$ acts nontrivially in section \ref{NSC}.\\

On the other hand, the closest point projection map from a leaf $L_p$ (at distance $\epsilon$ from $L_s$) to $L_s$ is a submersion (see \cite{srf}). The fiber of this map is clearly $\exp_s(S_{\epsilon}^{\ell})\cap L_p\cong \mathcal{L}$. This gives a fibration 
\begin{equation}\label{k}
\mathcal{L}\longrightarrow S^k\longrightarrow L_s \tag{$k$}
\end{equation}

\begin{rmk}
It would be nice to know what conditions imply trivial leaf holonomy about $L_s$. We saw in \ref{Hol} that this is guaranteed when $L_s$ is simply connected. If we insist that the ambient manifold $M$ is also simply connected, then by transversality, $\text{codim}(F_0)\geq 3$ implies $L_s$ is simply connected. But $\text{codim}(F_0)=0$ and $\text{codim}(F_0)=1$ imply that $\FF$ is a trivial (by points) foliation. So if $M$ is simply connected, nontrivial leaf holonomy about $L_s$ is only possible when $\text{codim}(F_0)=2$, in which case the regular leaves are necessarily circles. This forces $\dim(L_s)$ to be either 0 or 1. If $\dim(L_s)=0$, then the leaves of the foliation on $S^\ell$ are diffeomorphic to the global leaves of $\FF$ and the holonomy action is necessarily trivial. Thus, the only possible case where nontrivial leaf holonomy about $L_s$ may occur (while $M$ is simply connected) is when $L_s$ is an exceptional circle leaf. In such a point leaf maximal SRF, the infinitesimal foliations of both the principal leaves and $L_s$ are trivial (by points), and the infinitesimal foliations of the point leaves are products of a point foliation and a concentric sphere foliation. In particular $\FF$ is \textit{infinitesimally polar}. Thus, by theorem 1.8 in \cite{lytchak2010geometric}, we see that $L_s$ cannot possibly be exceptional. In short, if $(M,\FF)$ is point leaf maximal and $M$ is simply connected, there is no leaf holonomy. 
\end{rmk}

\subsection{The Simply Connected Case} \label{SC}
\subsubsection{Circle Leaves}
We assume now that $(M,\FF)$ is a point leaf maximal SRF and $(M,g)$ is a simply connected, positively curved Riemannian manifold. As mentioned at the end of the previous section, this guarantees that $L_s$ is simply connected unless $\text{codim}(F_0)=2$, in which case the generic leaves are circles. In this case, we have the following reformulation of Thm 1.2 in \cite{Searlecirc}: 

\begin{theorem}
Let $M$ be a simply connected positively curved manifold. If $M$ admits a 1 dimensional point leaf maximal singular Riemannian foliation, then $M$ is foliated diffeomorphic to a sphere $S^n$ or a complex projective space $\mathbb{CP}^m=S^{2m+1}/S^1$.
\end{theorem}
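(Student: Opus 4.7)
By the Structure Theorem, $M$ decomposes as $D(L_s) \cup D(F_0)$, the union of two normal disc bundles glued along their common boundary, and the principal leaves are circles, so $\mathrm{codim}(F_0) = 2$. The Remark in Section \ref{sbmfd} then forces $\Gamma_s = 1$, and further restricts $L_s$ to be either a point or a (non-exceptional) circle. I would handle these two cases separately, identifying $M$ with an explicit model in each.

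In the case $L_s = \{\mathrm{pt}\}$, the fibration $(\ell)$ specializes to a Riemannian submersion $S^1 \to S^{n-1} \to F_0$. By the Gromoll--Grove/Wilking classification of Riemannian submersions from round spheres, this must be the Hopf fibration, so $n = 2m$ and $F_0 \cong \mathbb{CP}^{m-1}$. Take as a model $\widehat{M} = \mathbb{CP}^m$ with the point leaf maximal SRF arising from the $S^1$-action $e^{i\theta}\cdot[z_0 : z_1 : \cdots : z_m] = [e^{i\theta}z_0 : z_1 : \cdots : z_m]$, whose fixed set decomposes as $\{[1:0:\cdots:0]\} \sqcup \mathbb{CP}^{m-1}$. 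The disc bundle $D(\widehat{L}_s)$ is $D^{2m}$ foliated by concentric Hopf circles, identical in foliated structure to $D(L_s)$ as dictated by $(\ell)$. I would build a foliated diffeomorphism $D(L_s) \to D(\widehat{L}_s)$ and extend it radially across $D(F_0) \cong D(\widehat{F}_0)$ using the cone structure on $M/\FF$ from the Structure Theorem.

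In the case $L_s \cong S^1$, non-exceptionality of $L_s$ forces the infinitesimal foliation on $S_s^{n-2}$ to be trivial (a single leaf per slice), whence $F_0 \cong S^{n-2}$. Since $\pi_1(M) = 0$, $L_s$ is nullhomotopic in $M$, so its normal bundle is orientable; since real rank-$k$ vector bundles over $S^1$ are classified by $\pi_0(O(k)) = \mathbb{Z}_2$, this bundle must be trivial, and hence $D(L_s) \cong S^1 \times D^{n-1}$ foliated by the $S^1$-factor. The model is $\widehat{M} = S^n = S^1 * S^{n-2}$ with the join foliation by circles from the $S^1$ factor, for which $D(\widehat{L}_s)$ is also $S^1 \times D^{n-1}$ with the same foliated structure, and the argument concludes as before.

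\textbf{Main obstacle.} The principal difficulty is verifying that the boundary identification $\partial D(L_s) \to \partial D(\widehat{L}_s)$ extends to a \emph{foliated} (not merely topological) diffeomorphism across $D(F_0)$. Away from $F_0$ the foliation is regular and the identification can be propagated via horizontal lifts along radial geodesics; at $F_0$ itself, where the leaves collapse to points, compatibility rests essentially on the point leaf maximal hypothesis, which guarantees that $D(F_0)$ carries a concentric-sphere foliation on its normal fibres that matches the model by construction.
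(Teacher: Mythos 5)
The paper's proof hinges on a result you do not invoke: by Theorem 3.11 of Galaz-Garc\'ia--Radeschi (\cite{galaz2015singular}), every one-dimensional singular Riemannian foliation with closed leaves on a compact manifold is \emph{homogeneous}, i.e.\ there is a (possibly different) metric $g'$ on $M$ and an isometric $S^1$-action on $(M,g')$ whose orbits are precisely the leaves of $\FF$. All of the paper's subsequent bundle identifications flow from this. When $L_s$ is a principal circle, the $S^1$-action makes $\partial D(F_0)\to F_0$ a \emph{principal} $S^1$-bundle, and the nonvanishing section cut out by flowing $S^{\ell}_{\epsilon}(s)$ along a gradient-like field for $\mathrm{dist}(\bar{s},\cdot)$ forces this principal bundle to be trivial, giving $M\cong S^{\ell+2}$. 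When $L_s$ is a point, the free $S^1$-action on the intermediate region shows $\partial D(F_0)\to F_0$ is the Hopf submersion, giving $M\cong\mathbb{CP}^m$. Your proposal never establishes that $D(F_0)$ is the disc bundle you need it to be.

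You correctly reduce to the two soul cases (using the remark that rules out leaf holonomy when $\pi_1(M)=0$), and your observations that $\nu L_s$ is orientable over $S^1$ hence trivial, and that $(\ell)$ is a Hopf fibration, are both fine. But the gap you yourself flag as the ``main obstacle'' --- matching the two disc bundles \emph{as foliated bundles} across the common boundary --- is not a technical footnote; it is the crux. You write ``hence $D(L_s)\cong S^1\times D^{n-1}$ foliated by the $S^1$-factor'' and ``$D(F_0)\cong D(\widehat{F}_0)$'' without argument. The first assertion requires knowing the monodromy of the foliation around $L_s$ is trivial (which does follow from $G_s=\{1\}$, itself a consequence of the infinitesimal foliation at $s$ being by points, but you do not say this). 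The second requires classifying the circle bundle $\partial D(F_0)\to F_0$ and showing its Euler class matches the model, which you do not attempt. These are precisely the facts the homogeneity theorem delivers for free via the $S^1$-action. Either invoke homogeneity as the paper does, or replace the hand-waving at the boundary with an explicit argument that (a) $G_s=\{1\}$ forces trivial monodromy in the principal-circle case, and (b) simple connectivity of $\partial D(F_0)\cong S^{2m-1}$ (resp.\ $\partial D(F_0)\cong S^1\times S^{n-2}$) forces the circle bundle over $\mathbb{CP}^{m-1}$ (resp.\ $S^{n-2}$) to be the Hopf bundle (resp.\ trivial).
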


\begin{proof}
First note that such foliations are homogeneous (Thm 3.11 in \cite{galaz2015singular}), but possibly with respect to a different metric. That is, there is a metric $g'$ (not necessarily of positive curvature) and an isometric $S^1$ action on $(M,g')$ such that the orbits coincide with the leaves of $\FF$. In particular, $M/S^1$ and $M/\FF$ have diffeomorphic strata and isometric spaces of directions.

\no Now the dimension of $L_s$ is either 0 or 1, and since there is no leaf holonomy when $M$ is simply connected, it follows that $L_s$ is either a principal circle leaf or a point leaf. Fix a point $s\in L_s$ and a small metric normal sphere $S_{\epsilon}^\ell$ centered at $s$.\\ 

\no Suppose $L_s$ is a principal circle leaf. Then $M/\FF$ is a $(\ell+1)$-disc and $F_0\cong S^\ell$. Flowing the normal sphere above with the horizontal lift of a gradient-like vector field for $\text{dist}(\bar{s},\cdot)$, $\bar{s}\in M/\FF$, will cut out a smooth, nonvanishing section of $D(F_0)$, viewed as an $S^1$-bundle over $F_0$. 
The isometric action (with respect to $g'$) of $S^1$ on $M$ orients this bundle fiberwise, making it a principal $S^1$-bundle with a nowhere vanishing section, hence trivial. Flowing the normal spheres over all points of $L_s$ simultaneously then defines a diffeomorphism between $M-\left(\text{int}D(L_s)\cup \text{int}D(F_0)\right)$ and $S^1\times S^\ell \times (\delta,\pi/2-\delta)$ for some small $\delta>0$ (i.e. the ``interior'' of the join $S^1*S^{\ell}\cong S^{\ell+2}$). Since both $D(L_s)$ and $D(F_0)$ are trivial bundles, both normal projections are trivial (as are those to the focal spheres in the join) and we see that $M$ is diffeomorphic to $S^{\ell+2}$. This diffeomorphism is foliated if we foliate the sphere by the $S^1$'s from the first factor.\\

\no Suppose $L_s$ is a point leaf. Then $S^\ell=S^{2m+1}$ and $M/\FF$ is a cone on $F_0\cong \mathbb{CP}^m$. As above, flowing the horizontal lift of a gradient-like vector field for $\text{dist}(\bar{s},\cdot)$  will give a diffeomorphism between $M-\left(\text{int}D(L_s)\cup \text{int}D(F_0)\right)$ and $S^{2m+1}\times \{pt\} \times (\delta, \pi/2-\delta)$. Invoking the group action, which must be free on $M-\left(\text{int}D(L_s)\cup \text{int}D(F_0)\right)$, we see that the restriction $\pi: \bdy D(F_0)\to \bdy D(\pi(F_0))$ is a Riemannian submersion from $S^{2m+1}$ to $\mathbb{CP}^m$, i.e. it is the Hopf map. This is exactly the normal projection from $\bdy D(F_0)$ to $F_0$. Since the normal projection from $\bdy D(L_s)\cong S^{2m+1}$ to $L_s=\{pt\}$ is necessarily trivial, we see that $M$ is diffeomorphic to $\mathbb{CP}^m$. This diffeomorphism is foliated if we foliate $\mathbb{CP}^m$ by distance spheres around the fixed point corresponding to $L_s$.
\end{proof}

\subsubsection{Higher Dimensional Leaves}
\no Now we will assume that $\pi_1(M)=0$ and $\text{codim}(F_0)\geq 3$. Recall that in these cases, we have the fiber sequences \eqref{l} and \eqref{k}:
\begin{equation}
\mathcal{L}\longrightarrow S^\ell \longrightarrow F_0 \tag{$\ell$}
\end{equation}
\begin{equation} 
\mathcal{L}\longrightarrow S^k \longrightarrow L_s \tag{$k$}
\end{equation}

\no As we mentioned previously, as long as \eqref{l} is a nontrivial fibration (i.e. $\mathcal{L}\neq \{pt\}$ and $\mathcal{L}\neq S^\ell$), then the fiber  $\mathcal{L}$ is isometric to $S^1, S^3$ or $S^7$ (and $S^7$ can only occur when $\ell=15$). Moreover, the space of directions at the soul of $M/\FF$ (call it $\Sigma$) is isometric to a compact rank one symmetric space (CROSS). Thus, $F_0$ is diffeomorphic to a CROSS unless $\mathcal{L}=S^\ell$, in which case $F_0$ is a point. On the other hand, because the sphere $S^k$ is NOT a standard round sphere, the possibilities for $L_s$ are the same, but only up to cohomology ring (this can be seen via the Gysin sequence, for example). This is a key difference between point leaf maximal SRF's and fixed point homogeneous group actions (where both submanifolds are known up to diffeomorphism).\\

Cases can now be generated by the topological restrictions imposed by the fibrations \eqref{l} and \eqref{k}. Based on the dimension of $\mathcal{L}$, we get the following possibilities for $F_0$ (up to diffeomorphism) and $L_s$ (up to cohomology ring):

\begin{center}
\begin{tabular}{|c |c | c | c| c| c|}
\hline
$\text{Case}$	& $dim(\mathcal{L})$ & $S^l$ 	& $F_0$ 	& $S^k$ & $L_s$\\
\hline
\hline
$(A)$ & $l=k$	& $S^k$	  & $\{pt\}$ & $S^k$ 	  & $\{pt\}$\\
$(B)$ & $0$ & $S^\ell$ 	  & $S^\ell$		& $S^k$ 	 & $S^k$\\
$(C)$ & $1$ & $S^{2m+1}$  & $\mathbb{CP}^m$ & $S^{2j+1}$ & $\mathbb{CP}^j$ \\
$(D)$ & $3$ & $S^{4m+3}$  & $\mathbb{HP}^m$	& $S^{4j+3}$  & $\mathbb{HP}^j$  \\
$(E)$ & $7$ & $S^7, S^{15}$	  & $\{pt\}, S^8$	 & $S^{15}, S^{7}$	  & $S^8,\{pt\}$ \\
$(X)$ & $7$ & $S^{15}$ & $S^8$ & $S^{15}$ & $S^8$ \\
\hline
\end{tabular}
\end{center}

\begin{rmk}
Evidently, the larger $F_0$ is relative to $L_s$, the more we can say about $M$. This is particularly interesting when $L_s=\{pt\}$ (where we get a homeomorphism classification of $M$). However, the arguments below are purely algebraic topological and ``symmetric'' in $F_0$ and $L_s$, so we will assume without loss of generality that $\dim(F_0)\leq \dim(L_s)$ (or equivalently, $\ell\leq k$).
\end{rmk}

\no Cases \textbf{(A)} and \textbf{(B)} correspond to the two ``trivial'' fibrations \eqref{k} and \eqref{l}. We address case \textbf{(A)} first:

\begin{theorem}
Let $M$ be a simply connected positively curved manifold. If $M$ admits a codimension 1 point leaf maximal SRF such that the soul leaf is a point, then $M$ is foliated homeomorphic to a sphere.
\end{theorem}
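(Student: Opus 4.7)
The plan is to apply the Structure Theorem to decompose $M$ into two discs glued along a sphere, and then invoke the classical Alexander trick to recognize the result as a topological sphere carrying its standard concentric-sphere foliation.

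First, I observe that the codimension-$1$ hypothesis forces $M/\FF$ to be a $1$-dimensional Alexandrov space, hence an interval whose two endpoints correspond to the singular leaves $L_s$ and $F_0$. By assumption, $L_s$ is a point; and by the remark preceding the Structure Theorem, the codimension-$1$ case forces $F_0$ to be a single endpoint of $M/\FF$, so $F_0$ is likewise a single point. Applying statement (v) of the Structure Theorem, I obtain a foliated diffeomorphism $M \cong D(L_s)\cup_{\phi} D(F_0)$ in which both tubular neighborhoods reduce to closed discs $D^n$ (with $n = \dim M$), each foliated by concentric spheres around its center, and where $\phi\colon S^{n-1}\to S^{n-1}$ is the attaching homeomorphism identifying the common boundaries.

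Next, I would invoke the Alexander trick: the boundary homeomorphism $\phi$ extends radially to a homeomorphism $\tilde\phi\colon D^n \to D^n$ by $\tilde\phi(rx) = r\phi(x)$ for $x\in S^{n-1}$ and $r\in[0,1]$. This extension preserves the foliation by concentric spheres of $D^n$ (it carries the sphere of radius $r$ to itself), so post-composing with $\tilde\phi$ on one of the two factors yields a homeomorphism $D^n \cup_{\phi} D^n \to D^n \cup_{\mathrm{id}} D^n = S^n$ that sends each leaf of $\FF$ (a distance sphere from $L_s$) onto a latitude sphere of $S^n$. The net output is a foliated homeomorphism from $M$ to $S^n$ equipped with its standard point-leaf-maximal foliation by distance spheres from a pair of antipodal points.

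The main obstacle is that one cannot in general upgrade this to a foliated \emph{diffeomorphism}: the attaching map $\phi$ may induce an exotic smooth structure on the glued manifold, which is precisely why the Alexander trick must be applied in the topological rather than smooth category. Since the theorem only asserts foliated homeomorphism, this is exactly the strength that the trick provides, and no further work is needed.
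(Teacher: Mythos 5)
Your proposal is correct and takes essentially the same approach as the paper: identify $M/\FF$ as an interval with both endpoints corresponding to isolated point leaves, decompose $M$ as two $n$-discs foliated by concentric spheres glued along their common boundary, and conclude a foliated homeomorphism to $S^n$. The only difference is that you spell out the Alexander trick (and correctly flag why one cannot upgrade to a diffeomorphism), whereas the paper compresses that final step into a single ``clearly.''
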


\begin{proof}
In this case, we have that the endpoints of $M/\FF$ correspond to two isolated point leaves of $\FF$. Thus, $M$ is two $(k+1)$-discs foliated by concentric $S^k$ glued along their common boundary $S^k$. This is clearly foliated homeomorphic to $S^{k+1}$ (foliated by `lateral' subspheres).
\end{proof}

\no Next up is case \textbf{(B)}.

\begin{theorem}
Let $M$ be a simply connected positively curved manifold. If $M$ admits a point leaf maximal SRF such that the infinitesimal foliation on the normal sphere of the soul leaf is by points, then $M$ is foliated homeomorphic to a sphere.
\end{theorem}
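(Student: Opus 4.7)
The plan is to exhibit a foliation-preserving homeomorphism from $M$ onto the join $S^k\star S^\ell\cong S^{k+\ell+1}$. In Case B we have $\dim\mathcal{L}=0$, so the fiber sequences \eqref{l} and \eqref{k} collapse to diffeomorphisms $F_0\cong S^\ell$ and $L_s\cong S^k$, giving $\dim M=k+\ell+1$.

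The key structural observation is that the principal leaves in $M-(L_s\cup F_0)$ admit a canonical parameterization by $F_0\times(0,d)$, where $d=\text{dist}(\bar{s},F_0)$ in $M/\FF$. Inside $D(L_s)$, since each nearby leaf meets every normal slice to $L_s$ in a single point, the closest-point projection restricts to a diffeomorphism from each principal leaf onto $L_s$; in particular the family of principal leaves at a fixed distance $t$ from $L_s$ is indexed by the unit normal sphere $S^\ell$ at any chosen $s\in L_s$. Inside $D(F_0)$, each principal leaf is a normal sphere to $F_0$ centered at a unique point of $F_0$. The cone structure on $M/\FF$ supplied by part (iv) of the Structure Theorem reconciles the two pictures: the radial geodesic in $M/\FF$ from $\bar{s}$ in direction $v\in S^\ell$ terminates at a unique $f(v)\in F_0$, and $v\mapsto f(v)$ is a diffeomorphism.

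With this in hand, I would define $J\colon L_s\times F_0\times[0,1]\to M$ by sending $(l,f,t)$ with $t\in(0,1)$ to the unique point on the principal leaf indexed by $f$ that lies at distance $td$ from $L_s$ and projects to $l$ under closest-point projection, and extending by $J(l,f,0)=l$ and $J(l,f,1)=f$. Then $J$ factors through the join: at $t=0$ the value is independent of $f$ (all principal leaves collapse onto $L_s$), and at $t=1$ it is independent of $l$ (the principal leaf collapses onto its base point in $F_0$). The induced map $\bar{J}\colon L_s\star F_0\to M$ is a continuous bijection between compact Hausdorff spaces, hence a homeomorphism. Under $\bar{J}$ the foliation $\FF$ corresponds to the decomposition of $L_s\star F_0$ into the slices $L_s\times\{f\}\times\{t\}$ for $t\in(0,1)$ together with the two collapsed endpoints — precisely the orbit decomposition of the standard $SO(k+1)$-action on the first factor of $S^{k+\ell+1}\subset\mathbb{R}^{k+1}\times\mathbb{R}^{\ell+1}$, so $\bar{J}$ is foliated.

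The main obstacle is verifying that the two parameterizations of principal leaves (from $D(L_s)$ and from $D(F_0)$) match consistently across points of $L_s$ and of $F_0$, and that $\bar{J}$ is continuous at the collapsed endpoints. Both reduce to properties already guaranteed by the Structure Theorem together with the Case B hypothesis $\dim\mathcal{L}=0$: the cone structure of $M/\FF$ and the uniqueness of horizontal lifts of radial geodesics in the leaf space.
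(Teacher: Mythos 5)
Your approach is essentially the paper's: both exhibit a foliated homeomorphism from the join $L_s * F_0 \cong S^{k+\ell+1}$ (with the lateral $S^k$-foliation) onto $M$. The paper identifies $D(L_s)$ with the trivial product $S^k\times D^{\ell+1}$ (which holds because $L_s$ is a principal leaf with trivial infinitesimal foliation and, since $M$ is simply connected, trivial leaf holonomy), matches this with the corresponding tubular neighborhood in the model join $\widehat M = S^\ell * S^k$, and extends the resulting identification of $\partial D(L_s)\cong S^k\times S^\ell$ across $M$ and $\widehat M$ by flowing along horizontal lifts of gradient-like vector fields for $\mathrm{dist}(\bar s,\cdot)$ and $\mathrm{dist}(\widehat L_s,\cdot)$.

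The one genuine issue in your write-up is the definition of $J$ via closest-point projection to $L_s$. That projection is a diffeomorphism from a principal leaf onto $L_s$ only while the leaf sits inside the tubular neighborhood $D(L_s)$; once a leaf crosses into $D(F_0)$ (i.e., for $t$ near $1$) the nearest-point map to $L_s$ need not be single-valued, let alone a bijection, so as written $J$ is not well-defined on all of $L_s\times F_0\times(0,1)$. You flag the right repair in your closing paragraph — the unique horizontal lift of the radial gradient-like flow from $\bar s$ — but your stated formula for $J$ doesn't actually use it. If you instead define $J(l,f,t)$ as the time-$td$ flow of $l$ along the horizontal lift of the radial field toward $f$, then the map is well-defined on all of $(0,1)$, agrees with closest-point projection near $L_s$, collapses correctly at $t=0$ and $t=1$, and the rest of your argument (factoring through the join, a continuous bijection of compact Hausdorff spaces, and the foliated property via the homothetic transformation lemma) goes through exactly as in the paper. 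So this is a fixable technical slip rather than a wrong strategy, but as stated the construction does have a hole.
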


\begin{proof}
Although we do not use it, note that in this case $M/\FF$ is homeomorphic to an $(\ell+1)$-disc. This is because $L_s$ in this case is itself a principal leaf, so $M/\FF$ is a smooth manifold (with boundary) whose boundary is isometric to a sphere $F_0\cong S^\ell$.\\
Because $L_s \cong S^k$ is principal, we have $D_{\epsilon}(L_s)=S^k\times D^{\ell+1}$, foliated by $S^k$'s from the first factor. As a model space, consider the join foliation $(S^\ell,\{pts\})*(S^k,S^k)$ on $\widehat{M}=S^{\ell+k+1}$. This is a point leaf maximal SRF, with $\widehat{L_s}=S^k$ and $\widehat{F_0}=S^\ell$. The tubular neighborhood about $\widehat{L_s}$ is $S^k\times D^{\ell+1}$, as in our case. Clearly, this is homeomorphic to $D(L_s)$, and in particular, the restriction to the boundary of this tubular neighborhood $S^k\times S^\ell$ is also a homeomorphism. We can then use the flows on $\widehat{M}$ and $M$ of the gradient-like vector fields for $\text{dist}(\widehat{L_s},\cdot)$ and $\text{dist}(L_s,\cdot)$, respectively, to uniquely extend this to a homeomorphism from $\widehat{M}$ to $M$. This homeomorphism a foliated one if one foliates $\widehat{M}=S^\ell*S^k$ by the $S^k$'s from the second factor .
\end{proof}

\no The following more general statement has a weaker conclusion than the results above, so it suffices to prove it for the remaining cases.

\begin{theorem}
Let $M^n$ be a simply connected positively curved manifold. If $M$ admits a point leaf maximal SRF, then $M$ has the cohomology ring of a CROSS. Moreover, if the soul leaf is a point, then $M$ is homeomorphic to a CROSS.
\end{theorem}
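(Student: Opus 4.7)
The plan is to use the disk bundle decomposition $M = D(L_s) \cup_E D(F_0)$ from the Structure Theorem. The common boundary $E$ carries two oriented sphere-bundle structures $S^\ell \to E \to L_s$ and $S^k \to E \to F_0$, and with $U \simeq L_s$, $V \simeq F_0$, I would feed these into the Mayer--Vietoris sequence
\[\cdots \to H^i(M) \to H^i(L_s) \oplus H^i(F_0) \to H^i(E) \to H^{i+1}(M) \to \cdots.\]
The standing WLOG hypothesis $\ell \leq k$ (equivalently $\dim F_0 \leq \dim L_s$) forces $k+1 > \dim F_0$ in each remaining case (C), (D), (E), (X), so the Euler class of $S^k \to E \to F_0$ vanishes in $H^{k+1}(F_0)$, and the Gysin sequence collapses to give $H^*(E) \cong H^*(F_0) \otimes H^*(S^k)$ additively.

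Plugging this together with the truncated polynomial rings $H^*(L_s)$ and $H^*(F_0)$ (read off the table) into Mayer--Vietoris, the connecting and restriction maps are determined by degree, yielding $H^*(M)$ additively isomorphic to that of the expected CROSS: $\mathbb{CP}^{m+j+1}$, $\mathbb{HP}^{m+j+1}$, or $\mathbb{OP}^2$ in cases (C), (D), and (E) respectively. The ring structure is then recovered by tracing a low-degree generator: pulling back the generator of $H^*(F_0)$ to a class $y \in H^*(M)$ through the Mayer--Vietoris sequence, and using the perfect Poincar\'e duality pairing on $M$ to force the powers $y, y^2, \ldots, y^N$ to span $H^*(M)$ and reach the fundamental class, reproducing the truncated polynomial ring of the target CROSS.

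For the ``moreover'' clause, when $L_s$ is a single point, $D(L_s)$ is just $D^n$, and $M = D(F_0) \cup_E D^n$ is homotopy equivalent to the Thom space $T(\nu)$ of the normal bundle $\nu$ of $F_0$. The cohomology calculation forces the Euler class of $\nu$ to be a generator of $H^{\mathrm{rank}(\nu)}(F_0)$, identifying $\nu$ as the canonical line, quaternionic, or rank-$8$ bundle over $F_0$ underlying the appropriate Hopf fibration. Its Thom space is by construction the standard CROSS one ``projective dimension'' higher (e.g., $\mathbb{CP}^{m+1} \cong T(\mathcal{O}(1))$ over $\mathbb{CP}^m$). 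Upgrading from homotopy equivalence to homeomorphism is then immediate from Structure Theorem (v): the gluing map $\partial D^n \to E$ is the explicit diffeomorphism furnished by the gradient-like flow, so $M$ is homeomorphic to the standard join construction of the CROSS.

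The main obstacle I foresee is ruling out case (X), where $F_0 \simeq S^8 \simeq L_s$ cohomologically and $n = 24$. The Mayer--Vietoris/Gysin calculation produces a 24-manifold whose cohomology is $\mathbb{Z}$ in degrees $0, 8, 16, 24$ and $0$ otherwise; if Poincar\'e duality can be pressed to force the ring structure to be $\mathbb{Z}[y]/y^4$ with $|y|=8$, this would be the cohomology ring of a hypothetical $\mathbb{OP}^3$. By Adams' theorem on the Hopf invariant one problem, no closed manifold realizes this ring, and we obtain the required contradiction that eliminates case (X). Pinning down the ring structure in (X) --- controlling how $y^2$ sits in $H^{16}(M)$ relative to the Mayer--Vietoris connecting map --- is the most delicate step of the argument, and the rest of the proof is essentially diagram chasing.
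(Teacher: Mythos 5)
Your overall architecture is reasonable -- Mayer--Vietoris on the disk bundle decomposition $M = D(L_s)\cup_E D(F_0)$ with Gysin for $H^*(E)$ is a valid substitute for the paper's long exact sequence of the pair $(M,D(L_s))$ combined with the Thom isomorphism, and it yields the same additive answer. But two of the steps you treat as routine are exactly where the paper has to work, and your proposed shortcuts do not hold up.

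\textbf{The ring structure when $\ell=k$.} You correctly flag case \textbf{(X)} as the obstacle and invoke Adams' Hopf invariant one theorem to rule it out once the ring is shown to be $\mathbb{Z}[y]/y^4$ with $|y|=8$. However, you underestimate what it takes to pin that ring down, and you miss that the same difficulty already appears in cases \textbf{(C)} and \textbf{(D)} whenever $\ell=k$. The issue is this: Poincar\'e duality applied to a low-degree generator $y$ only says there is \emph{some} generator $z$ of $H^{n-|y|}(M)$ with $y\smile z$ generating $H^n(M)$ -- it does not force $z$ to be a power of $y$. To conclude $H^*(M)\cong\mathbb{Z}[y]/y^{N+1}$ you need a ring isomorphism $H^*(M)\to H^*(\text{truncated poly})$ through (at least) the middle degree, and then PD fills in the top half. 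When $\ell < k$ the inclusion $L_s\hookrightarrow M$ supplies precisely such a ring isomorphism in degrees $\leq k-1 \geq n/2$ (this drops out of the paper's Thom/LES sequence, and is \emph{not} visible from your Mayer--Vietoris setup without further argument). But when $\ell = k$ this only reaches degree $k-1 = n/2 - 1$, one short of what you need. Your suggestion to ``control how $y^2$ sits in $H^{16}(M)$ relative to the Mayer--Vietoris connecting map'' is not a workable plan -- the MV connecting map carries no cup product information by itself. The paper fills this gap with a genuinely geometric input: it constructs an explicit codimension-$(k-1)$ submanifold $N\subset M$ (a cohomology $\mathbb{CP}^{m+1}$, resp. $\mathbb{HP}^{m+1}$, resp. $\mathbb{OP}^2$) out of the foliation structure -- a normal disk to $F_0$ union a $\mathcal{L}$-disk subbundle of $D(L_s)$ -- and shows by a diagram chase through the Thom isomorphism that $N\hookrightarrow M$ is a ring isomorphism one degree further, through $k+1$. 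Without such an auxiliary submanifold your argument stalls in all $\ell=k$ subcases, not just \textbf{(X)}.

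\textbf{The ``moreover'' clause.} Your claim that the Euler class of $\nu$ identifies the bundle is simply false outside the complex case. Oriented rank-$2$ bundles over $\mathbb{CP}^m$ are indeed classified by $H^2$, but oriented rank-$4$ bundles over $\mathbb{HP}^m$ (even over $\mathbb{HP}^1=S^4$, where they form $\pi_3(SO(4))\cong\mathbb{Z}^2$) and rank-$8$ bundles over $S^8$ are not determined by their Euler class. So the cohomology of $M$ alone does not identify $D(F_0)$ as the standard Hopf disk bundle, and your Thom space argument cannot get off the ground in cases \textbf{(D)} and \textbf{(E)}. The paper avoids this entirely: since $L_s$ is a point, $\partial D(L_s)$ is a genuine round distance sphere $S^{n-1}$ and its foliation by regular leaves is a Riemannian submersion onto $F_0$; by the Gromoll--Grove / Wilking classification this \emph{is} a Hopf fibration, so the normal disk bundle of $F_0$ is the standard one by metric rigidity, not by a characteristic class computation. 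The foliated homeomorphism to the model CROSS is then built by matching $\partial D(L_s)$ with the model's distance sphere and extending radially via the gradient-like flow. You should replace your Euler-class step with this geometric identification; the rest of your gluing picture is then fine.
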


\begin{proof} 
Consider the following long exact sequence in relative cohomology:
\[...\rightarrow H^{i-1}(D(L_s))\longrightarrow H^i(M,D(L_s))\longrightarrow H^i(M)\longrightarrow H^i(D(L_s))\longrightarrow H^{i+1}(M,D(L_s))\rightarrow...\]
Now $H^i(M, D(L_s))\cong \widetilde{H}^i(M/D(L_s))$, and given that $M=D(F_0)\bigcup_E D(L_s)$, we see that $M/D(L_s)\simeq D(F_0)/S(F_0)$, where $S(F_0)$ denotes the $k$-sphere bundle over $F_0$. This is precisely the Thom space of the rank $k+1$ vector bundle over $F_0$ which induces $D(F_0)$. By the Thom isomorphism theorem, we have that \[H^i(M,D(L_s))\cong H^{i-(k+1)}(F_0)\] and since $D(L_s)$ deformation retracts to $L_s$, we have the long exact sequence \[...\rightarrow H^{i-1}(L_s)\longrightarrow H^{i-(k+1)}(F_0)\longrightarrow H^i(M)\longrightarrow H^i(L_s)\longrightarrow H^{(i+1)-(k+1)}(F_0)\rightarrow...\]

\no For dimension reasons, we recover the following cohomology groups:

\begin{enumerate}
\item[\textbf{(C)}] $M$ has additive cohomology of $\mathbb{CP}^{\frac{1}{2}(l+k)}$
\item[\textbf{(D)}] $M$ has additive cohomology of $\mathbb{HP}^{\frac{1}{4}(l+k-2)}$
\item[\textbf{(E)}] $M$ has additive cohomology of $\mathbb{OP}^2$ 
\item[\textbf{(X)}]$
H^i(M) = \left\{
        \begin{array}{ll}
            \mathbb{Z} & \quad i=0,8,16,24 \\
            0 & \quad \textnormal{otherwise}
        \end{array}
    \right.
$
\end{enumerate}

\no In the cases where $l<k$, the ring structure is recovered via Poincar\`e duality: the isomorphisms $H^i(M)\cong H^i(L_s)$ for $i\leq k-1$ induced by the inclusion $L_s\hookrightarrow M$ induce a ring isomorphism up to $i=\dim(L_s)$. In cases \textbf{(C)-(E)}, $\dim(L_s)\geq\frac{1}{2}\dim(M)$, so all cup products are determined. \\

\no For example, in case \textbf{(C)}, we know that up to $i=k-1$, the cohomology ring is generated by an element $[\alpha]\in H^2(M)$. Since $k-1\geq \frac{1}{2}\dim(M)$, Poincar\`e duality gives that the top cohomology $H^n(M)$ can be generated by cup products of generators below dimension $k-1$, which are powers of $[\alpha]$. It necessarily follows that all cohomology groups are generated by powers of $[\alpha]$.

\begin{rmk}
At this point, most cases are settled, but special care needs to be taken when $l=k$ since Poincar\'e duality fails to determine all cup products. We will exhibit the specifics in case \textbf{(C)} when $\ell=k$. The proof works the same way in the other cases, with minor adjustments.\\
\end{rmk}

\no So we have $l=k=2m+1$, $F_0=\mathbb{CP}^m, L_s\sim \mathbb{CP}^m$, and we know $M$ has the additive cohomology of $\mathbb{CP}^k$. As we saw above, we have a ring isomorphism between $H^*(L_s)$ and $H^*(M)$ up to dimension $i=k-1=2m$. So we have a element $[\alpha]\in H^2(M)$ for which $[\alpha]^m$ generates $H^{k-1}(M)$. Poincar\'e duality only tells us that there exists a generator of $H^{k+1}(M)$ whose cup product with $[\alpha]^m$ generates the top cohomology, but it does not guarantee that this generator is equal to $[\alpha]^{m+1}$. For this, we will exhibit a topologically embedded submanifold $N\cong \mathbb{CP}^{m+1}$ whose inclusion induces a ring isomorphism with $H^*(M)$ up to dimension $i=2m+2=k+1\geq \frac{1}{2}\dim(M)$. Then apply Poincar\'e duality as before. \\

\no Consider a small minimal geodesic segment emanating from $\pi(L_s)\in M/\FF$. The other endpoint of this segment can be flowed along the gradient-like vector field for $\text{dist}(F_0,\cdot)$ (which is radial near $F_0$) to a nearest point $p\in F_0$. This gives an integral curve $\gamma$ in $M/\FF$ which can be `covered' by two half curves ($\gamma_1$ from $p$ to the midpoint $x\in\gamma$, and $\gamma_2$ from $x$ to $\pi(L_s)$). Thus $\pi^{-1}(\gamma)$ is covered by the preimages of these two half segments. The preimage $\pi^{-1}(\gamma_1)$ is a $(2m+2)$-disc centered at $p$ and normal to $F_0$, whereas $\pi^{-1}(\gamma_2)$ is a subbundle of $D(L_s)$ (the fiber being the 2-disc bounded by a smoothly varying $\mathcal{L}$ at each point). In fact, the boundary of this subbundle is simply a global leaf $S^k$ (it is the preimage of the midpoint $x\in M/\FF$). This gives a submanifold $N\subset M$ whose cohomology can be attained from Mayer-Vietoris as
\begin{center}
\begin{tikzcd}[cells={nodes={minimum height=2em}}]
...\arrow[r] & H^{i-1}(S^k) \arrow[r] & H^i(N) \arrow[r] & H^i(\{pt\})\oplus H^i(L_s) \arrow[r] & H^i(S^k) \arrow[r] &...
\end{tikzcd}
\end{center}

\no from which we see that $N$ has the cohomology ring of $L_s\sim \mathbb{CP}^m$ up to $i=k-1$, for dimension reasons. Since $\dim(N)=k+1=2m+2$, it follows from Poincar\`e duality that $N$ has the cohomology ring of $\mathbb{CP}^{m+1}$. We have inclusions $N\hookrightarrow M$ and $(N, D(L_s)\cap N)\hookrightarrow (M, D(L_s))$ and thus a sequence of maps

\adjustbox{scale=.85,center}{
\begin{tikzcd}[cells={nodes={minimum height=2em}}]
\arrow[r] & H^{i-1}(L_s)\arrow[r]\arrow[d,"\cong"] & H^i(M,D(L_s)) \arrow[r]\arrow[d,"j^*"] & H^i(M) \arrow[r]\arrow[d,"i^*"] & H^i(L_s)\arrow[r]\arrow[d,"\cong"] &  H^{i+1}(M,D(L_s)) \arrow[r]\arrow[d,"j^*"] & \  \\
\arrow[r] & H^{i-1}(L_s)\arrow[r] & H^i(N,D(L_s)\cap N) \arrow[r] & H^i(N) \arrow[r] & H^i(L_s)\arrow[r] &  H^{i+1}(N,D(L_S)\cap N) \arrow[r] & \  
\end{tikzcd}
}

\no If we can show that the induced map $H^*(M,D(L_s))\overset{j^*}\to H^*(N,D(L_s)\cap N)$ is a ring isomorphism up to dimension $i=k+2$, we are done, by the five lemma. Now the map $j^*$ fits in the following diagram (``unpacking'' the Thom isomorphism)

\begin{center}
\begin{tikzcd}[cells={nodes={minimum height=2em}}]
H^i(M,D(L_s))\arrow[d,"j^*"] & H^i(D(F_0),\bdy D(F_0)) \arrow[l, "\text{exc}-L_s"', "\cong"]\arrow[d,"\eta^*"] & H^{i-(k+1)}(D(F_0)) \arrow[l, "\text{Thom}"',"\cong"]\arrow[d,"\eta^*"]\\
H^i(N,D(L_s)\cap N) & H^i(D^{k+1},S^k) \arrow[l, "\text{exc}-L_s"', "\cong"] & H^{i-(k+1)}(D^{k+1}(\{pt\})) \arrow[l, "\text{Thom}"',"\cong"]
\end{tikzcd}
\end{center} 

\no where the rightmost $\eta^*$ is technically a restriction of the middle $\eta^*$, both of which are induced by the inclusion $(D^{k+1}(\{pt\}),S^k)\overset{\eta}\hookrightarrow (D(F_0),\bdy D(F_0))$. Now since $H^{i-(k+1)}(F_0)=0=H^{i-(k+1)}(\{pt\})$ for $i\leq k$ and $i=k+2$, we need only check that the diagram above commutes for $i=k+1$ ($j^*$ is certainly an isomorphism when its domain and target are both $0$). The rightmost $\eta^*$ is an isomorphism between $H^0(D(F_0))$ and $H^0(D^{k+1}\{pt\})$, since both are path connected. Now if $\tau\in H^{k+1}(D(F_0),\bdy D(F_0))$ is the Thom class of $D(F_0)$, its restriction $\eta^*(\tau)\in H^{k+1}(D^{k+1},S^k)$ is the Thom class of $D^{k+1}(\{pt\})$ and since the Thom isomorphism is the cup product with the Thom class, the right square commutes by naturality of the cup product.\\

\no Thus, we conclude that the induced map $H^*(M,D(L_s))\overset{j^*}\to H^*(N,D(L_s)\cap N)$ is a ring isomorphism up to dimension $i=k+2$, as desired. Now Poincar\'e duality gives the correct cohomology ring $H^*(M)\cong\mathbb{Z}[x]/x^{k+1}$ in case \textbf{(C)} when $\ell=k$. Similarly for case \textbf{(D)} when $\ell=k$. This also shows that in case \textbf{(X)}, $H^*(M)\cong \mathbb{Z}[x]/x^4$. But in this case, $|x|=8$, which is impossible (see Corollary 4.L.10 in \cite{hatcher2002algebraic}). So case \textbf{(X)} can be thrown out. This proves the first statement.\\

\no In the special case that $L_s=\{pt\}$, then $D(L_s)$ is homeomorphic to a disc foliated by concentric $S^k$'s. The boundary of this disc is homeomorphic to a regular leaf $S^k$. This homeomorphism has a unique radial extension ``down to'' $F_0$, which is foliated as well, by the homothetic transformation lemma and the definition of point leaf maximal. This describes a foliated homeomorphism from $M$ to a CROSS of the same `type' as $F_0$ (of one higher dimension).
\end{proof}  

\subsection{The Non-simply Connected Case} \label{NSC}

\no If $(M,\FF)$ is a point leaf maximal SRF and $M$ is not simply connected, we can pullback the foliation to the universal cover $\tilde{M}$ and use the results above. We have

\begin{theorem}
If $M$ is a non-simply connected, positively curved manifold which admits a point leaf maximal SRF, then $M$ is a $\mathbb{Z}_2$-quotient of an odd dimensional cohomology complex projective space or is homeomorphic to a space form $S^n/\Gamma$. Moreover, the leaf holonomy group about $L_s$ is isomorphic to $\pi_1(M)$.
\end{theorem}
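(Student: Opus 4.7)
The approach is to pass to the universal cover $\tilde{M}$, apply the simply connected classification, and then analyze the deck action of $\pi_1(M)$. Since being a closed SRF and being point leaf maximal are local properties, pulling back $\FF$ along the universal covering $\tilde{M} \to M$ produces a point leaf maximal SRF $\widetilde{\FF}$ on the simply connected, positively curved $\tilde{M}$. The simply connected case of the Main Theorem then gives that $\tilde{M}$ is either homeomorphic to a sphere or has the cohomology ring of $\mathbb{CP}^n$, $\mathbb{HP}^n$, or $\mathbb{OP}^2$. The deck group $\pi_1(M)$ acts on $\tilde{M}$ freely and isometrically, preserving $\widetilde{\FF}$ together with its stratification.

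Next I would classify the possible deck actions. If $\tilde{M} \cong S^n$, the quotient $M = \tilde{M}/\pi_1(M)$ is a spherical space form. For the nonspherical cohomology CROSS cases I would invoke classical Lefschetz restrictions on free finite group actions: on a cohomology $\mathbb{CP}^n$, any self-homeomorphism acts on $H^2 \cong \mathbb{Z}$ by $\pm 1$, and a free action by a nontrivial element requires its Lefschetz number $\sum_{k=0}^n (\pm 1)^k$ to vanish, forcing the sign to be $-1$ and $n$ odd; hence only $\mathbb{Z}_2$ can act freely, and only on odd-dimensional $\mathbb{CP}^n$. Classical transformation-group results (Bredon, Hsiang--Su) exclude nontrivial free finite group actions on cohomology $\mathbb{HP}^n$ or $\mathbb{OP}^2$, so these cases cannot occur when $\pi_1(M) \neq 1$. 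This yields the dichotomy in the theorem.

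To prove $\Gamma_s \cong \pi_1(M)$, I would fix a soul leaf $\tilde{L}_s \subset \tilde{M}$. Since $\tilde{M}/\widetilde{\FF}$ has a unique soul point by the Structure Theorem, $\pi_1(M)$ fixes it setwise and therefore acts freely on $\tilde{L}_s$, exhibiting $\tilde{L}_s \to L_s$ as a Galois cover with deck group $\pi_1(M)$. Because $\tilde{M}$ is simply connected, the remark at the end of Section \ref{sbmfd} applied to $\widetilde{\FF}$ gives that the leaf holonomy about $\tilde{L}_s$ is trivial, so $\tilde{F}_0 \cong S^\ell/\widetilde{\FF}_|$ is a smooth CROSS. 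The deck action extends to the distinguished tubular neighborhood of $\tilde{L}_s$ and restricts to a free action on $\tilde{F}_0$ with quotient $F_0$. By the proposition of Section \ref{AG}, the holonomy action of $\Gamma_s$ on $\tilde{F}_0$ likewise has quotient $F_0$. Both actions realize $\tilde{F}_0 \to F_0$ as the same Galois cover, so $\pi_1(M)$ and $\Gamma_s$ are each identified with its deck group, giving the required isomorphism.

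The main obstacle I anticipate is this last identification. While both $\pi_1(M)$ and $\Gamma_s$ act freely on $\tilde{F}_0$ with quotient $F_0$, one must verify that the two quotient maps actually agree, i.e. that the natural projection $\tilde{F}_0 \to F_0$ coming from the universal cover coincides with the one obtained by gluing local leaves into global ones via holonomy. This reduces to checking that transporting along a loop in $L_s$ by linearized lifts versus lifting the loop to a path in $\tilde{L}_s$ (with endpoint differing by an element of $\pi_1(M)$) produce compatible identifications on nearby point leaves, modulo $G_s^0$. A secondary subtlety is locating the right transformation-group citations to exclude free actions on cohomology $\mathbb{HP}^n$ and $\mathbb{OP}^2$, which are standard but not elementary.
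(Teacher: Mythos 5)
Your proposal follows the same skeleton as the paper: pull back to the universal cover, invoke the simply connected classification, analyze the deck action, and then identify the holonomy group with $\pi_1(M)$. But there are two places where the paper takes a shortcut you missed.

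First, for classifying the deck action, you work directly with $\wtilde{M}$ as a cohomology CROSS and must cite Lefschetz/Steenrod/Bredon--Hsiang--Su type results to kill cohomology $\mathbb{HP}^n$, $\mathbb{OP}^2$, and even-dimensional $\mathbb{CP}^n$. The paper instead pushes the deck action down to the space of directions $\wtilde{\Sigma}$ at the soul of $\wtilde{M}/\wtilde{\FF}$. Because the soul is unique, $\pi_1(M)$ fixes it and so acts \emph{by isometries} on $\wtilde{\Sigma}$, and $\wtilde{\Sigma}$ is not merely a cohomology CROSS but a genuine one (it is the base of a Riemannian submersion from a round sphere, hence a CROSS with its standard metric by Gromoll--Grove/Wilking). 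Classifying free finite groups of isometries of a standard CROSS is elementary and needs none of the heavy transformation-group machinery, which is why the paper gets the sphere / odd-$\mathbb{CP}^n$ dichotomy for $\wtilde{\Sigma}$ directly and then transports it to $\wtilde{M}$. Your route is workable but strictly harder, and a pure Lefschetz count does not by itself rule out a degree-4 sign flip on cohomology $\mathbb{HP}^n$ with $n$ odd; you would need the mod-$3$ Steenrod power or the cited classical results to close that.

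Second, the ``main obstacle'' you flag for the identification $\Gamma_s\cong\pi_1(M)$ is not actually there. You do not need the two quotient maps from $\wtilde{F}_0$ (equivalently from $S^\ell/\FF_|$ or $\wtilde{\Sigma}$) down to $F_0$ to coincide. The paper simply observes that $\Sigma$, the space of directions at the soul of $M/\FF$, has two presentations as a quotient of the same simply connected space by free finite group actions: $\Sigma\cong\wtilde{\Sigma}/\pi_1(M)$ (from the deck action on the soul directions) and $\Sigma\cong (S^\ell/\FF_|)/\Gamma_s$ (from the space-of-directions proposition in \ref{AG}). Once you know both actions are free and the covering space is simply connected, both groups are canonically $\pi_1(\Sigma)$, hence isomorphic; no compatibility of the two projections is required. (Freeness of the $\Gamma_s$ action follows because the quotient $\Sigma\cong F_0$ is a manifold of the same dimension, and freeness of the $\pi_1(M)$ action because it restricts the free deck action on $\wtilde{F}_0\subset\wtilde{M}$.) Your careful worry about matching linearized lifts against loop lifts is precisely the kind of check the paper's argument is designed to avoid.
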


\begin{proof}
Let $(M,\FF)$ be our point leaf maximal SRF and consider its pullback foliation to the universal cover $(\wtilde{M},\wtilde{\FF})$. The condition of being point leaf maximal is a local one, hence holds for $(\wtilde{M},\wtilde{\FF})$. So we know from the work above that $\wtilde{M}$ is homeomorphic to a sphere or has the cohomology ring of a CROSS. \\

\no Denote by $\wtilde{L}_s$ and $\wtilde{F}_0$ the soul leaf and point leaf component (that fits the definition) for $\wtilde{\FF}$. Because $\wtilde{\FF}$ is the pullback foliation of $\FF$, it follows that \[(\wtilde{M}/\wtilde{\FF})/\pi_1(M)=M/\FF\]  and in particular, $\wtilde{F}_0/\pi_1(M)=F_0$ and $\wtilde{L}_s/\pi_1(M)=L_s$. Moreover, $\pi_1(M)$ is finite (Bonnet-Myers) and acts by isometries on the space of directions at the soul of the leaf space $\wtilde{M}/\wtilde{\FF}$, call it $\wtilde{\Sigma}$. Since we've shown previously that $\wtilde{\Sigma}$ is isometric to a CROSS, it follows that $\wtilde{\Sigma}$ is either a sphere or an odd-dimensional complex projective space. Since $\pi_1(M)$ must also act on $\wtilde{M}$, we must have that $\wtilde{M}$ itself is homeomorphic to a sphere or is an odd-dimensional cohomology complex projective space. This proves the first statement.\\

\no In the case that $\wtilde{M}$ is a sphere, we've seen that the infinitesimal foliation on the normal spheres to $\wtilde{L}_s$ is by points (and the same is true in $M$). Thus, $\wtilde{\Sigma}=S^\ell/\wtilde{\Gamma}$, where $\wtilde{\Gamma}$ is the leaf holonomy group about $\wtilde{L}_s,$ which is trivial since $\wtilde{M}$ is simply connected. Also, $\wtilde{\Sigma}/\pi_1(M)=\Sigma$, the space of directions at the soul of $M/\FF$, which is itself isometric to $S^\ell/\Gamma$, where $\Gamma$ is the leaf holonomy about $L_s$. So we realize $\Sigma$ in two ways: \[S^\ell/\pi_1(M)\cong \Sigma \cong S^\ell/\Gamma\]
Since both actions are free, we have that $\Gamma\cong \pi_1(M)$.\\

\no When $\wtilde{M}$ is an odd-dimensional cohomology complex projective space, we know that the infinitesimal foliation of $S^\ell$ (in both $\wtilde{M}$ and $M$) is the Hopf fibration. Again, because $\wtilde{M}$ is simply connected, we know that $\wtilde{\Gamma}=\{1\}$, so $\wtilde{\Sigma}$ is isometric to $\mathbb{CP}^n$ for some odd $n$. Since the only finite free action on odd-dimensional complex projective space is by $\mathbb{Z}_2$, we know that $\pi_1(M)\cong \mathbb{Z}_2$. But $\Sigma$ is also a metric quotient of $\mathbb{CP}^n$ by the finite group $\Gamma$, so $\Gamma\cong \pi_1(M)\cong \mathbb{Z}_2$.
\end{proof}

We get the following special result when the leaves are circles, whose proof is the same as above, but with stronger conclusion coming from the stronger conclusion in the simply connected case:
\begin{theorem}
If $M$ is a non-simply connected, positively curved manifold admitting a 1-dimensional point leaf maximal SRF, then $M$ is diffeomorphic to a space form $S^n/\Gamma$.
\end{theorem}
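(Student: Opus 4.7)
The plan is to mimic the proof of the preceding non-simply connected theorem, but to exploit the fact that the 1-dimensional simply connected classification gives a foliated \emph{diffeomorphism} (not merely a homeomorphism) with either $S^n$ or $\mathbb{CP}^m$. The upgraded regularity on the universal cover is what will be responsible for the upgraded regularity of the quotient, and the job of the proof is to pass this information down and simultaneously eliminate the complex projective alternative.

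First I would pull $\FF$ back to the universal cover $\tilde{M}$, obtaining a foliation $\tilde{\FF}$ which is again 1-dimensional and point leaf maximal (both conditions are local) and for which $(\tilde{M}/\tilde{\FF})/\pi_1(M) = M/\FF$. By the 1-dimensional simply connected theorem, $\tilde{M}$ is foliated diffeomorphic to either a sphere $S^n$ with the join foliation $S^1 * S^{\ell}$, or to $\mathbb{CP}^m = S^{2m+1}/S^1$ with an isolated point soul leaf $\tilde{L}_s$ and a point-leaf boundary component $\tilde{F}_0 \cong \mathbb{CP}^{m-1}$. The group $\pi_1(M)$ is finite (Bonnet--Myers), acts freely on $\tilde{M}$ by deck transformations, and because $\tilde{\FF}$ is the pullback of $\FF$, this action is foliation preserving.

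The next step is to eliminate the $\mathbb{CP}^m$ possibility. Any deck transformation permutes leaves, so it preserves the unique soul leaf at maximal distance from $\partial (\tilde{M}/\tilde{\FF})$. In the $\mathbb{CP}^m$ case, $\tilde{L}_s$ is a single point, hence is fixed pointwise by every element of $\pi_1(M)$, contradicting the freeness of the deck action. (Equivalently, the restriction of any deck action to $\tilde{F}_0 \cong \mathbb{CP}^{m-1}$ must also be free, and ruling out such free finite actions gives the same conclusion.) Thus $\tilde{M}$ is foliated diffeomorphic to $S^n$, and $M = \tilde{M}/\pi_1(M)$ is diffeomorphic to $S^n/\Gamma$ with $\Gamma = \pi_1(M)$, as required.

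The main obstacle is exactly this elimination of the complex projective case; once it is done, the rest is just reading the simply connected result through the covering quotient. The argument above handles it cleanly because in the $\mathbb{CP}^m$ model the soul leaf is a single point rather than a positive-dimensional submanifold; in the prior theorem, where the soul leaf could be of higher dimension, the $\mathbb{CP}$ case could not be eliminated, which is why that statement had to admit the $\mathbb{Z}_2$-quotient possibility and settle for a homeomorphism conclusion rather than a diffeomorphism.
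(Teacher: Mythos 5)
Your proposal is correct and follows the same overall strategy the paper intends: pull the foliation back to the universal cover, invoke the one-dimensional simply connected classification, and descend. The paper in fact gives no separate proof here, only the remark that ``the proof is the same as above, but with stronger conclusion coming from the stronger conclusion in the simply connected case,'' so you are essentially filling in what the author left to the reader.

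The one place you go beyond a verbatim repetition of the earlier non-simply-connected proof is the elimination of the $\mathbb{CP}^m$ alternative. If one ran the previous theorem's argument unchanged, the conclusion would read ``$M$ is diffeomorphic to $S^n/\Gamma$ or is a $\mathbb{Z}_2$-quotient of $\mathbb{CP}^{\text{odd}}$,'' and one must notice that the second alternative is vacuous in the circle-leaf case. Your observation, that when $\widetilde{M}\cong\mathbb{CP}^m$ the soul leaf $\widetilde{L}_s$ is the single isolated point leaf, that the deck group acts by isometries on $\widetilde{M}/\widetilde{\FF}$ and hence fixes the soul, and that this contradicts freeness, is exactly the missing step and is correct. (It also explains, as you note, why the earlier theorem could not discard the $\mathbb{CP}$ case: there $\widetilde{L}_s$ may be positive-dimensional and the deck action can act freely on it.) The remaining point, that the resulting $\Gamma$ appears as the leaf holonomy at $L_s$, is established in the preceding theorem's proof and carries over unchanged, though your write-up does not restate it; this is harmless since it is not part of the statement you were asked to prove.
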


\begin{rmk}
The only finite quotients of spheres in the classification of fixed point homogeneous manifolds are quotients by cyclic groups $\mathbb{Z}_q$, or finite subgroups of $SU(2)$ or $Sp(1)$ (see \cite{Searlediff}).  However, any finite group $\Gamma$ which acts freely on some $S^k$ ($k$ odd) may appear in the foliation setting. To see this, let $\FF$ be the SRF given by the join $\left(S^{2k+1}, \FF\right)\cong \left(S^k,\{pts\}\right)*\left(S^k,S^k\right)$. This is clearly point leaf maximal. Now the diagonal action of $\Gamma$ on $S^{2k+1}$ is free and preserves $\FF$, and hence $\FF$ descends to a point leaf maximal foliation on the space form $S^{2k+1}/\Gamma$. 
\end{rmk}\ \\

\bibliographystyle{abbrv}
\bibliography{References}

\end{document}